\documentclass[11pt]{amsart}
\usepackage{amssymb, latexsym}
\usepackage{graphicx}
\usepackage{placeins}
\theoremstyle{plain}
\newtheorem{theorem}{Theorem}
\newtheorem{corollary}{Corollary}
\newtheorem*{thm*}{Theorem}

\numberwithin{equation}{section}

\begin{document}

\title[Random sequential adsorption  and its continuum limit ]{  A new look at some aspects of one-dimensional  random sequential adsorption  and its continuum limit  }

\author{Ross G. Pinsky}


\address{Department of Mathematics\\
Technion---Israel Institute of Technology\\
Haifa, 32000\\ Israel}
\email{ pinsky@technion.ac.il}

\urladdr{https://pinsky.net.technion.ac.il/}

\subjclass[2010]{60C05, 60F05, 60F99, 82B20}
\keywords{discrete packing,   random sequential adsorption, parking problem,  vacancies on a line, packing problem}
\date{}

\begin{abstract}
Fix a positive integer $k\ge2$, and for  $n\ge k$, consider a row of $n$ molecules. From among the $n-k+1$ nearest-neighbor $k$-tuples of molecules, select one   uniformly at random and bond the $k$ molecules.
Now, from all the remaining nearest-neighbor $k$-tuples, again select one uniformly at random and bond the $k$ molecules. Continue like this until there are no nearest-neighbor $k$-tuples left.
Let $M^{(n)}_k$ denote the expected value of the  number of bonded molecules. An explicit integral formula for   $m_k:=\lim_{n\to\infty}\frac{M^{(n)}_k}n$ is known,  and an explicit formula for
$m_\infty:=\lim_{k\to\infty}m_k$ is known.
The constant $m_\infty$, known as the R\'enyi parking constant,  arises as the limiting packing density for a continuous analog of the above discrete packing problems. These are all models of what is called
random sequential adsorption (RSA). The first part of this paper studies the gaps of sizes $0,1,\cdots, k-1$ that arise between bonded $k$-tuples
and shows that
after scaling the $k$-grid,  when $k\to\infty$ the
empirical  distribution of expected gaps in the discrete problem on the lattice
 converges weakly to an appropriate gap distribution that is known to hold for the above noted continuous analog.
The second part of the this paper
considers  two different models of the discrete bonding problem when both $k_1$-bonding  and $k_2$-bonding occur, with $2\le k_1<k_2$. Explicit formulas are obtained for the analogs of $m_k$, and  the asymptotic behavior of these analogs is studied both when  $k_2\to\infty$ with $k_1$ fixed, and when $k_1,k_2\to\infty$ at certain ratios.

\end{abstract}

\maketitle
\section{Introduction and Statement of Results}\label{intro}
Fix a positive integer $k\ge2$, and for  $n\ge k$, consider a row of $n$ molecules. From among the $n-k+1$ nearest-neighbor $k$-tuples of molecules, select one   uniformly at random and bond the $k$ molecules.
Now, from all the remaining unbonded nearest-neighbor $k$-tuples, again select one uniformly at random and bond the $k$ molecules. Continue like this until there are no unbonded nearest-neighbor $k$-tuples left.
Let $M^{(n)}_k$ denote the expected value of the  number of bonded molecules and consider $\lim_{n\to\infty}\frac{M^{(n)}_k}n$, the limiting expected density of bonded molecules as the number of molecules increases to infinity.
The following theorem is known.
\begin{theorem}\label{mkthm}
Let $M_k^{(n)}$ denote the expected number of bonded molecules under $k$-bonding on a row of $n$ molecules. Then
\begin{equation}\label{mk}
m_k:=\lim_{n\to\infty}\frac{M^{(n)}_k}n=k\int_0^1\exp(2\sum_{j=1}^{k-1}\frac{s^j-1}j)\thinspace ds.
\end{equation}
\end{theorem}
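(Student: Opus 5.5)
We will derive a recursion for $M^{(n)}_k$ by conditioning on the first bonded $k$-tuple, pass to a generating function, solve the resulting linear first-order ODE explicitly, and then read off the linear growth rate of $M^{(n)}_k$ from the behavior of the generating function at its singularity $t=1$.

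\emph{Step 1 (recursion).} Set $M^{(n)}_k=0$ for $0\le n\le k-1$. If the first chosen $k$-tuple occupies positions $j+1,\dots,j+k$, with $j\in\{0,\dots,n-k\}$, then the process splits into two independent processes on rows of lengths $j$ and $n-k-j$. Moreover, conditioned on the split, each of these is again uniform sequential $k$-bonding on its own row. This holds because at each stage the choice is uniform among all remaining admissible $k$-tuples, so the order in which the two sides interleave does not affect the law of the final configuration on each side. Hence
\[
M^{(n)}_k=k+\frac{2}{n-k+1}\sum_{j=0}^{n-k}M^{(j)}_k,\qquad n\ge k.
\]

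\emph{Step 2 (generating function and ODE).} Let $F(t)=\sum_{n\ge0}M^{(n)}_kt^n$. Write the recursion as $(n-k+1)M^{(n)}_k=k(n-k+1)+2\sum_{j\le n-k}M^{(j)}_k$, multiply by $t^{n-k}$, and sum over $n\ge k$. The left side becomes $(t^{1-k}F)'$ up to a power of $t$. Using $\sum_{n\ge k}\sum_{j\le n-k}M^{(j)}_kt^{n-k}=F(t)/(1-t)$, we get
\[
\frac{d}{dt}\bigl(t^{1-k}F(t)\bigr)=\frac{k}{(1-t)^2}+\frac{2t^{k-1}}{1-t}\,t^{1-k}F(t).
\]
With $G=t^{1-k}F$ this reads $G'-\frac{2t^{k-1}}{1-t}G=\frac{k}{(1-t)^2}$, and $G(0)=0$ since $F=O(t^k)$. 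Because $\frac{t^{k-1}}{1-t}=\frac1{1-t}-\sum_{j=0}^{k-2}t^j$, an integrating factor is
\[
\exp\Bigl(-\int_0^t\frac{2s^{k-1}}{1-s}ds\Bigr)=(1-t)^2\exp\Bigl(2\sum_{j=1}^{k-1}\frac{t^j}{j}\Bigr).
\]
This gives
\[
F(t)=\frac{k\,t^{k-1}}{(1-t)^2}\exp\Bigl(-2\sum_{j=1}^{k-1}\frac{t^j}{j}\Bigr)\int_0^t\exp\Bigl(2\sum_{j=1}^{k-1}\frac{s^j}{j}\Bigr)ds .
\]

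\emph{Step 3 (asymptotics), the main point requiring care.} Write $F(t)=H(t)/(1-t)^2$, where $H$ is entire. Then $M^{(n)}_k=\sum_{m\le n}(n-m+1)h_m$, with $h_m$ the Taylor coefficients of $H$. Since $H$ is entire, $\sum_m m|h_m|<\infty$, and it follows that $M^{(n)}_k/n\to\sum_m h_m=H(1)$. Alternatively one can apply a Tauberian argument, or a standard transfer theorem, since the only singularity is the double pole at $t=1$. Evaluating,
\[
H(1)=k\,e^{-2\sum_{j=1}^{k-1}1/j}\int_0^1e^{2\sum_{j=1}^{k-1} s^j/j}ds=k\int_0^1\exp\Bigl(2\sum_{j=1}^{k-1}\frac{s^j-1}{j}\Bigr)ds,
\]
which is \eqref{mk}.

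The step most likely to need care is Step 1: justifying that the two sub-rows evolve as independent uniform processes, so that the conditional expectation of bonded molecules on the left block is exactly $M^{(j)}_k$. After that, Steps 2 and 3 are routine calculus together with the elementary coefficient-asymptotics argument for an entire function divided by $(1-t)^2$.
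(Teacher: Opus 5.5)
Your proof is correct. Steps 1 and 2 follow the route the paper sketches at the end of Section 2: the same recursion $M^{(n)}_k=k+\frac{2}{n-k+1}\sum_{j=0}^{n-k}M^{(j)}_k$, a generating function, and an explicitly solvable first-order linear ODE. Your normalization $G=t^{1-k}F$ is a small improvement. It removes the $\frac{k-1}{t}$ term from the coefficient, so you can impose $G(0)=0$ directly instead of running the paper's $\epsilon\to0$ argument.

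The genuine difference is Step 3. The paper proves only the Abelian direction: if $\lim_n M^{(n)}_k/n=a$ exists, then $(1-t)^2F(t)\to a$ as $t\to1^-$. It must therefore import existence of the limit from Proposition 3.3 of [P14], via $S^{(n)}_k/n^2$, and it remarks that it sees no quick way to prove existence.

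Your observation that $(1-t)^2F(t)=H(t)$ is entire gives existence and the value at once. Since $0\le\frac{n-m+1}{n}\le 2$ for $m\le n$, dominated convergence yields $M^{(n)}_k/n=\sum_{m\le n}\frac{n-m+1}{n}h_m\to\sum_m h_m=H(1)$. For this, $\sum_m|h_m|<\infty$ already suffices. Your stronger bound $\sum_m m|h_m|<\infty$ gives the sharper expansion $M^{(n)}_k=(n+1)H(1)-H'(1)+o(1)$.

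The same argument applies verbatim to Theorem~\ref{gapsthm}, because $(1-t)^2g_{k;l}(t)$ is also entire. So your Step 3 supplies exactly the quick existence proof the paper says it lacks.

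The paper's Abelian lemma is more generic, since it never uses the explicit form of the solution, but that is why it needs an external existence result. Your argument uses the explicit form and is self-contained. Your justification of the splitting in Step 1 is adequate; the paper simply asserts the recursion from the definition.
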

The case $k=2$, proved by Flory \cite{F39} in 1939, thirty-five years before he won the Nobel Prize in Chemistry, appeared in a chemistry journal,  and was then rediscovered and proved along with a corresponding weak law of large numbers  by Page \cite{P59} in 1959 in a statistics journal. These proofs do not generalize to $k>2$. We proved the  general case   along with a corresponding weak law of large numbers in \cite{P14}.
It turns out though that there were a number of earlier  works that obtained the result \eqref{mk}, with varying levels of rigor and varying levels of detail; see for example, \cite{GHH}, \cite{M}, \cite{FRM}.
The integral in \eqref{mk} can be calculated explicitly only when $k=2$; one has $m_2=1-e^{-2}\approx0.865$.
The values of $m_k$ for various choices of $k$  appear in   the first column of table \ref{table:2} below. It is clear from numerical evidence that $m_k$ is decreasing in $k$; this was stated as a conjecture in \cite{P14}, and seems to be open still.

One can show that
\begin{equation}\label{minfty}
m_\infty:=\lim_{k\to\infty}m_k=\int_0^\infty\exp\left(-2\int_0^x\frac{1-e^{-y}}ydy\right)dx\approx0.7476.
\end{equation}
The constant $m_\infty$ is known as the R\'enyi parking constant. It arises directly from the following continuous packing problem.
Consider the interval $[0,L]$. A car of length 1 wants to park on the interval. If $L<1$, then it can't park. If $L\ge1$, the car chooses a point $p$ uniformly at random from the interval $[0,L-1]$ and parks, taking up the interval $[p,p+1]$.
This creates two smaller vacant intervals---$[0,p]$ and $[p+1,L]$. Now implement the above rule for each of these intervals. Continue like this until no more cars can park. Let $C^{(L)}$ denote the expected value of the length of space taken up by parked cars (or equivalently, of the number of parked cars).
R\'enyi \cite{R58} proved that
the limiting expected density of the space taken up by parked cars as the length of the interval  increases to infinity is given by
$$
\lim_{n\to\infty}\frac{C^{(L)}}L=m_\infty.
$$
In a private communication \cite{B15}, Julien Bureaux provided a proof of  \eqref{minfty}. We will reproduce it below at the end of this introductory section, as variants of it will be needed in this paper.
In a literature search for this paper, we found an older similar proof of \eqref{minfty} in \cite{DPSZ}, which itself cites \cite{GHH}, where the result is demonstrated without  complete mathematical rigor.
If one scales the discrete molecule problem above by $k$, it is easy to see heuristically that the R\'enyi parking problem is the continuum limit of the discrete problem.

Both the discrete and the continuous  models above  are models of what is known in the    chemistry literature as random sequential adsorption (RSA).
For more about models of RSA and other related phenomena, the reader is referred to the review article \cite{E93}.
From the mathematics literature point of view, the above problems are discrete and random packing problems. For a fresh look at the discrete  case with $k=2$, see \cite{G15}.
For some refinements of R\'enyi's result, see for example, \cite{DR}, \cite{SV} and \cite{S24}.

This paper has two contributions.
The first contribution  is a   study  of the gaps between bonded $k$-tuples in the discrete model and a proof that after scaling the one-dimensional integer lattice by $k$,
the
empirical  distribution of expected gaps in the discrete problem on the lattice
 converges weakly  as $k\to\infty$  to an appropriate gap distribution that is known to hold for the R\'enyi parking problem.
The second and more novel contribution is a study of two different models of the discrete bonding problem when both $k_1$-bonding  and $k_2$-bonding occur, with $2\le k_1<k_2$.
For the proofs of the results concerning  one of these two models, the results on the gaps will play an essential role.

Turning to the first topic,
 we now describe the gaps.
When $k$-bonding is completed on an interval of length $n$, there will be between any two consecutive bonded $k$-tuples  a set of $l$ unbonded molecules, for some $l\in\{0,1, \cdots, k-1\}$.
Such an $l$-tuple of unbonded molecules  also occurs before the leftmost bonded $k$-tuple and after the rightmost one.
For $l\in\{0,1,\cdots, k-1\}$, let $X^{(n)}_{k;l}$ denote the random variable counting the number of $l$ gaps, and let $G^{(n)}_{k;l}=EX^{(n)}_{k;l}$  denote the expected value of the number of  $l$-gaps.
(We emphasize that this  counts the number of $l$-gaps, not the number of molecules contained in the $l$-gaps, which is $l$ times as large. Also, we are working with maximal gaps. That is, if there are $l$ molecules between two consecutive $k$ bonds, then this counts only as one $l$ gap and not also as, say, two $l-1$ gaps.)
A central limit theorem was proven for $X^{(n)}_{k;l}$ in \cite{KR}. That paper also proved the following theorem concerning
 $\lim_{n\to\infty} \frac{G^{(n)}_{k;l}}n$, the limiting expected  number  of $l$-gaps per unit length, that is, the limiting expected density of $l$-gaps,  as the number of molecules increases to infinity.
For completeness, we will provide a different but similar  proof of the theorem, which is in the spirit of the proofs of the  rest of the results in this paper, and which we proved before we were aware of \cite{KR}.
\begin{theorem}\label{gapsthm}
Let $G^{(n)}_{k;l}$ denote the expected number of $l$ gaps  under $k$-bonding on a row of $n$ molecules. Then
\begin{equation}\label{lgaps}
g_{k;l}:=\lim_{n\to\infty}\frac{G^{(n)}_{k;l}}n=2\int_0^1(1-s)s^l\exp\left(2\sum_{j=1}^{k-1}\frac{s^j-1}j\right)\thinspace ds,\ l=0,1,\cdots, k-1.
\end{equation}
\end{theorem}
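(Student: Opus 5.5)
I will follow the standard recursive / generating-function route that also underlies Theorem \ref{mkthm}. The first bond is chosen uniformly among the $n-k+1$ positions. If it occupies molecules $i+1,\dots,i+k$ with $i\in\{0,\dots,n-k\}$, the row splits into two independent subproblems of lengths $i$ and $n-k-i$. The $l$-gaps are additive over these pieces, except for boundary effects: a piece of length $m<k$ contains no bond and is itself a single gap of size $m$. I therefore take as the base case $G^{(m)}_{k;l}=\delta_{m,l}$ for $0\le m\le k-1$, which counts an empty piece of length $0$ as a $0$-gap. This convention matches the definition: there is one gap between each consecutive pair of bonds and one at each end, so the number of gaps equals the number of bonds plus one. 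For $n\ge k$ one then has the exact recursion
\[
G^{(n)}_{k;l}=\frac{2}{n-k+1}\sum_{i=0}^{n-k}G^{(i)}_{k;l}.
\]

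Next I multiply by $n-k+1$ and pass to the generating function $F(t)=\sum_{n\ge0}G^{(n)}_{k;l}t^n$. The recursion becomes the first-order linear ODE
\[
\bigl(t^{1-k}\cdot(\text{tail of }F)\bigr)'=\frac{2t\,F(t)}{1-t}\cdot t^{-k}.
\]
More concretely, with $H(t)=\sum_{n\ge k}G^{(n)}_{k;l}t^{n-k+1}$ one gets $H'(t)=\frac{2}{1-t}\bigl(t^{l}+H(t)\bigr)$ up to the precise indexing, since the base part of $F$ is just $t^l$. The forcing term coming from the initial values is $2t^{l}/(1-t)$, and the coefficient $2/(1-t)$ is corrected by the fact that $G^{(i)}$ for $i<k$ is $\delta_{il}$. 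Following the computation for $M^{(n)}_k$ in \cite{P14}, the integrating factor is $(1-t)^{2}\exp\bigl(2\sum_{j=1}^{k-1}t^j/j\bigr)$. This is where $\exp(2\sum_{j=1}^{k-1}(s^j-1)/j)$ enters, since $-2\log(1-t)-2\sum_{j<k}t^j/j=2\sum_{j\ge k}t^j/j$. Solving the ODE should give
\[
F(t)\text{-tail}=\frac{C}{(1-t)^2}\exp\Bigl(-2\sum_{j=1}^{k-1}\frac{t^j}{j}\Bigr)\int_0^t 2(1-s)s^{l}\exp\Bigl(2\sum_{j=1}^{k-1}\frac{s^j}{j}\Bigr)\,ds
\]
plus terms analytic on a larger disk, with the $(1-s)s^l$ factor coming from the forcing term times the integrating factor.

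Finally, I extract $\lim G^{(n)}_{k;l}/n$ from the double pole at $t=1$. The function is $\frac{A}{(1-t)^2}+O((1-t)^{-1})$ with
\[
A=2\int_0^1(1-s)s^l\exp\Bigl(2\sum_{j=1}^{k-1}\frac{s^j-1}{j}\Bigr)\,ds,
\]
and the remaining factors are entire apart from this singularity. A Tauberian argument then gives $G^{(n)}_{k;l}\sim An$. The argument can be exactly the one used in \cite{P14}: there one shows that $G^{(n)}/n$ has a limit, via monotonicity or subadditivity of the recursion, or writes the coefficients explicitly and uses that the remainder has a singularity of order at most $1/(1-t)$.

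The main obstacle I expect is getting the ODE exactly right, including the initial-value bookkeeping for $n<k$ and the index shift from $n-k+1$. The Tauberian step also needs care, since $G^{(n)}$ is not obviously monotone. My first approach to that step is to check that the coefficients of $(1-t)^2F(t)$ are absolutely summable, so that $F$ has the form $A/(1-t)^2+B/(1-t)+R(t)$ with $R$ having $o(n)$ coefficients, which directly gives the limit. As a sanity check I would verify that $\sum_l l\,g_{k;l}=1-m_k$ using Theorem \ref{mkthm} and the identity $\sum_l l s^l(1-s)$-type telescoping, and that $\sum_l g_{k;l}=m_k/k$.
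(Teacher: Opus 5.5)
The recursion, the boundary convention $G^{(m)}_{k;l}=\delta_{m,l}$ for $m<k$, and the generating-function/linear-ODE strategy are exactly the paper's. However, the ODE you display is wrong, and the error is not just an indexing issue.

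Set $T(t)=\sum_{n\ge k}G^{(n)}_{k;l}t^n$ and $F=t^l+T$. Multiplying the recursion by $(n-k+1)t^{n-k}$ and summing over $n\ge k$ gives $(t^{1-k}T)'=\frac{2F(t)}{1-t}$, with no factor $t^{1-k}$ on the right. So for $H=t^{1-k}T$,
\[
H'(t)=\frac{2}{1-t}\bigl(t^l+t^{k-1}H(t)\bigr),
\]
which is equivalent to the paper's \eqref{ode}. The factor $t^{k-1}$ in front of $H$ is the only source of $\exp(2\sum_{j<k}s^j/j)$. Your equation $H'=\frac{2}{1-t}(t^l+H)$ has integrating factor $(1-t)^2$ alone, and it would produce the wrong constant $2\int_0^1(1-s)s^l\,ds=\frac{2}{(l+1)(l+2)}$.

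With the corrected equation, the integrating factor you quote is right. You must then use $H(0)=0$ to kill the homogeneous solution $c\,e^{-2\sum_{j<k}t^j/j}(1-t)^{-2}$. This term has a double pole at $1$ and would change the answer, so it is not one of your "terms analytic on a larger disk". Once it is removed, you get exactly
\[
T(t)=\frac{t^{k-1}}{(1-t)^2}\,e^{-2\sum_{j=1}^{k-1}t^j/j}\int_0^t 2(1-s)s^l e^{2\sum_{j=1}^{k-1}s^j/j}\,ds,
\]
that is, $C=t^{k-1}$ and no extra terms. This is \eqref{g(t)}.

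For the final step your route genuinely differs from the paper's, and it is better. The paper proves only the Abelian direction: if $G^{(n)}_{k;l}/n\to a$, then $(1-t)^2T(t)\to a$. It obtains existence of the limit by adapting Proposition 3.3 of \cite{P14} to $S^{(n)}_{k;l}/n^2$, and remarks that it sees no quick direct proof.

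Your observation supplies that quick proof. The function $E(t):=(1-t)^2T(t)$ is entire, being $t^{k-1}$ times the exponential of a polynomial times the integral of an entire function. The identity $T=E/(1-t)^2$ holds on $[0,1)$ by uniqueness for the regular initial value problem at $0$. It then holds on the unit disk by the identity theorem; $T$ converges there since $0\le G^{(n)}_{k;l}\le n/k+1$. Writing $E=\sum e_mt^m$ with $\sum|e_m|<\infty$, dominated convergence gives
\[
\frac{G^{(n)}_{k;l}}{n}=\sum_{m\le n}e_m\,\frac{n-m+1}{n}\longrightarrow E(1)=2\int_0^1(1-s)s^l e^{2\sum_{j=1}^{k-1}\frac{s^j-1}{j}}\,ds .
\]
In fact $G^{(n)}_{k;l}=(n+1)E(1)-E'(1)+o(1)$. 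This makes the proof self-contained and even yields the constant term. The paper's Abelian-plus-existence argument ignores this analytic structure and relies on an external result. You can drop the vague fallback about monotonicity or subadditivity in \cite{P14}.
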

\noindent \bf Remark.\rm\ For $l=k-1$, the integral above can be calculated explicitly.
One has $\left((1-s)^2\exp\left(2\sum_{j=1}^{k-1}\frac{s^j}j\right)\right)'=-2(1-s)s^{k-1}\exp\left(2\sum_{j=1}^{k-1}\frac{s^j}j\right)$, thus from \eqref{lgaps},
$g_{k;k-1}=\exp\left(-2\sum_{j=1}^{k-1}\frac1j\right)$.

Since $M_k^{(n)}$ is the expected number of molecules bonded in $k$-tuples, and since $\frac{M_k^{(n)}}n$ is the number of bonded $k$-tuples, it follows
from the definitions that
$$
\begin{aligned}
&\sum_{l=0}^{k-1}G^{(n)}_{k;l}=\frac1kM^{(n)}_k+1;\\
&\sum_{l=1}^{k-1}lG^{(n)}_{k;l}+M^{(n)}_k=n,
\end{aligned}
$$
and consequently by Theorem \ref{gapsthm},
\begin{equation}\label{dandg}
\begin{aligned}
&\sum_{l=0}^{k-1}g_{k;l}=\frac{m_k}k;\\
&\sum_{l=1}^{k-1}lg_{k;l}=1-m_k.
\end{aligned}
\end{equation}
We remind the reader that $g_{k;l},lg_{k;l}, \frac{m_k}k$  and $m_k$ are respectively the number of $l$-gaps, the number of molecules in $l$-gaps, the number of  $k$-bonds and the number of molecules in $k$-bonds per unit length.
We illustrate this in Table \ref{table:1} for $k=10$. (Due to roundoff error, there is a  discrepancy in the fourth decimal place  between the sum  of the values  in each of the two   columns    and the corresponding  values appearing in the caption to the table.)
\begin{table}[h!]
\centering
\begin{tabular}{||c c c||}
 \hline
 l& $g_{10;l}$& $lg_{10;l}$ \\ [0.5ex]
 \hline\hline
 0 & 0.0187&0   \\
 1 & 0.0124&0.0124   \\
 2 & 0.0095& 0.0190  \\
 3 & 0.0077&0.0232 \\
 4 & 0.0065&0.0260  \\
 5 & 0.0059&0.0280  \\
 6 & 0.0049&0.0293 \\
 7 & 0.0043&0.0303\\
 8 & 0.0039&0.0310\\
 9 & 0.0035&0.0314\\
 \hline
\end{tabular}
\vspace{1ex}
\caption{Gaps densities from Theorem \ref{gapsthm} for $k=10$;\\  $\sum_{l=0}^9g_{10;l}=\frac{m_{10}}{10}\approx0.0770$, \ \ $\sum_{l=1}^9lg_{10;l}=1-m_{10}\approx0.2304$}
\label{table:1}
\end{table}
It follows from \eqref{lgaps} that $g_{k;l}$ is decreasing for $l\in\{0,\cdots, k-1\}$. Numerical evidence suggests that $lg_{k;l}$ is increasing
for $l\in\{0,\cdots, k-1\}$, but we don't have a proof.

In light of  the first line of \eqref{dandg}, define the scaled  empirical measure of the    expected  gap densities for $k$-bonding by
\begin{equation}\label{muk}
\mu^{\text{gaps}}_k([0,\gamma])=\frac k{m_k}\sum_{l: \frac l{k-1}\le \gamma}g_{k;l},\ \gamma\in[0,1].
\end{equation}
We will prove the following weak convergence result for the sequence $\{\mu^{\text{gaps}}_k\}_{k=2}^\infty$ of probability measures on $[0,1]$.
\begin{theorem}\label{weakconvgapsthm}
Consider the probability  measures $\{\mu^{\text{gaps}}_k\}_{k=2}^\infty$ defined in \eqref{muk}, where $g_{k;l}$ is as in \eqref{lgaps} and $m_k$ is as in \eqref{mk}. Then
\begin{equation}\label{weakconvgaps}
\lim_{k\to\infty}\mu^{\text{gaps}}_k([0,\gamma])=\frac2{m_\infty}\int_0^\infty\left(1-e^{-\gamma x}\right)\exp\left(-2\int_0^x\frac{1-e^{-y}}ydy\right)dx, \ \gamma\in[0,1],
\end{equation}
where $m_\infty$ is the R\'enyi parking constant from \eqref{minfty}. In particular, the limiting probability measure
\begin{equation}\label{gapmeasinfty}
\mu^{\text{gaps}}_\infty:=\text{w}-\lim_{k\to\infty}\mu^{\text{gaps}}_k
\end{equation}
has density
\begin{equation}\label{densitygaps}
f_{\mu^{\text{gaps}}_\infty}(\gamma)=\frac2{m_\infty}\int_0^\infty xe^{-\gamma x}\exp\left(-2\int_0^x\frac{1-e^{-y}}ydy\right)dx, \ \gamma\in[0,1].
\end{equation}
\end{theorem}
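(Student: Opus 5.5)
We compute $\mu^{\text{gaps}}_k([0,\gamma])$ directly from Theorem~\ref{gapsthm}, sum the geometric series in $l$, and then pass to the limit $k\to\infty$ with the substitution $s=1-\frac xk$. This is the same change of variables that turns \eqref{mk} into \eqref{minfty} in Bureaux's argument. Since $m_k\to m_\infty$ by \eqref{minfty}, it suffices to treat $\sum_{l\le\gamma(k-1)}g_{k;l}$.

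Let $L_k=\lfloor \gamma(k-1)\rfloor$. By \eqref{lgaps} and the identity $(1-s)\sum_{l=0}^{L_k}s^l=1-s^{L_k+1}$,
\[
\sum_{l=0}^{L_k}g_{k;l}=2\int_0^1\bigl(1-s^{L_k+1}\bigr)\exp\Bigl(2\sum_{j=1}^{k-1}\frac{s^j-1}j\Bigr)ds .
\]
Substituting $s=1-\frac xk$, $x\in[0,k]$, we get
\[
\mu^{\text{gaps}}_k([0,\gamma])=\frac{2}{m_k}\int_0^k\Bigl(1-\bigl(1-\tfrac xk\bigr)^{L_k+1}\Bigr)\exp\Bigl(2\sum_{j=1}^{k-1}\frac{(1-x/k)^j-1}j\Bigr)dx .
\]
The factor $k/m_k$ in \eqref{muk} cancels with $ds=dx/k$, leaving $2/m_k$. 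Since $(L_k+1)/k\to\gamma$, we have $(1-\frac xk)^{L_k+1}\to e^{-\gamma x}$ pointwise.

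For the exponent, write
\[
\sum_{j=1}^{k-1}\frac{(1-x/k)^j-1}j=-\int_0^{x}\frac{1-(1-u/k)^{k-1}}{u}\,du ,
\]
which follows from $\sum_{j=1}^{k-1}\frac{t^j-1}{j}=-\int_t^1\frac{1-r^{k-1}}{1-r}dr$ with $r=1-u/k$. The integrand converges to $\frac{1-e^{-u}}u$, and it is bounded by $\min(1,1/u)$ times a constant, so the exponent converges pointwise to $-2\int_0^x\frac{1-e^{-y}}y\,dy$.

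The main obstacle is justifying dominated convergence on the growing range $[0,k]$. We need a uniform integrable bound on $\exp(\cdot)$. For $u\ge1$ we have $1-(1-u/k)^{k-1}\ge 1-e^{-u(k-1)/k}\ge 1-e^{-1/2}$ for $k\ge2$, so
\[
\int_0^x\frac{1-(1-u/k)^{k-1}}u\,du\ge c\log x \quad\text{for } x\ge1,
\]
with $c=1-e^{-1/2}$. This gives the dominating function $x^{-2c}$, but $2c\approx0.79<1$, which is not integrable, so a sharper bound is needed. We instead use $(1-u/k)^{k-1}\le e^{-u/2}$ for $k\ge2$, which yields $1-(1-u/k)^{k-1}\ge1-e^{-u/2}$. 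Then the exponent is at most $-2\int_0^x\frac{1-e^{-u/2}}u\,du=-2\log(x/2)+O(1)$ for large $x$, so the integrand is dominated by $C\min(1,x^{-2})$, which is integrable. Since the factor $1-(1-x/k)^{L_k+1}$ lies in $[0,1]$, dominated convergence gives \eqref{weakconvgaps}.

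For $\gamma=1$, the right-hand side of \eqref{weakconvgaps} differs from $1$ by $\frac2{m_\infty}\int_0^\infty e^{-x}\exp(\cdots)dx$. This equals $0$ in the limit because of the identity $\frac{d}{dx}\exp\bigl(-2\int_0^x\frac{1-e^{-y}}y\,dy\bigr)=-2\frac{1-e^{-x}}x\exp(\cdots)$, integrated against the appropriate weight, together with \eqref{dandg}. More simply, convergence of the distribution functions at every $\gamma$ and continuity of the limit already give weak convergence, and total mass $1$ is inherited from $\mu_k^{\text{gaps}}([0,1])=1$.

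Finally, differentiating \eqref{weakconvgaps} in $\gamma$ under the integral sign, which is justified because $xe^{-\gamma x}\exp(\cdots)$ is dominated by $Cx\min(1,x^{-2})$, is not quite integrable at infinity. For $\gamma>0$ the factor $e^{-\gamma x}$ makes it integrable, and this gives the density \eqref{densitygaps} on $(0,1]$.
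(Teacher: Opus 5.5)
Your proposal is correct and follows the paper's proof: sum the geometric series via $(1-s)\sum_{l=0}^{L}s^l=1-s^{L+1}$, substitute $x=k(1-s)$ as in \eqref{sj-1}, and apply dominated convergence together with $m_k\to m_\infty$. Your explicit dominating function $C\min(1,x^{-2})$, obtained from $(1-u/k)^{k-1}\le e^{-u/2}$, supplies a detail the paper leaves implicit.

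One side remark is wrong but unnecessary. At $\gamma=1$ the right-hand side equals $2-\frac{2}{m_\infty}\int_0^\infty e^{-x}\exp(\cdots)\,dx$, and that subtracted term equals $1$, not $0$. Your fallback argument via $\mu^{\text{gaps}}_k([0,1])=1$ is correct and suffices. The density also follows most cleanly by Tonelli from $\int_0^\gamma xe^{-tx}\,dt=1-e^{-\gamma x}$.
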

From \eqref{densitygaps}, it is clear that $\lim_{\gamma\to0}f_{\mu^{\text{gaps}}_\infty}(\gamma)=\infty$.
The graph of the density $f_{\mu^{\text{gaps}}_\infty}$ appears in figure \ref{fig:density}.
We have the following corollary.
\begin{corollary}\label{expvalf}
The expected value of a $\mu^{\text{gaps}}_\infty$-distributed random variable is given by
\begin{equation}\label{gammafgamma}
\int_0^1\gamma f_{\mu^{\text{gaps}}_\infty}(\gamma)d\gamma=\frac{1-m_\infty}{m_\infty}\approx0.3376.
\end{equation}

\end{corollary}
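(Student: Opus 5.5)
Two routes are available: a direct integration of the density \eqref{densitygaps}, and a limiting argument from the discrete identities \eqref{dandg}. The direct computation is cleaner and is the one I would carry out.

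Write $F(x)=\exp\left(-2\int_0^x\frac{1-e^{-y}}ydy\right)$. By \eqref{densitygaps} and Tonelli (everything is nonnegative),
$$
\int_0^1\gamma f_{\mu^{\text{gaps}}_\infty}(\gamma)d\gamma=\frac2{m_\infty}\int_0^\infty x F(x)\left(\int_0^1\gamma e^{-\gamma x}d\gamma\right)dx .
$$
The inner integral is elementary:
$$
\int_0^1\gamma e^{-\gamma x}d\gamma=\frac{1-e^{-x}-xe^{-x}}{x^2}.
$$
So the expression becomes
$$
\frac2{m_\infty}\int_0^\infty \frac{1-e^{-x}}{x}F(x)dx-\frac2{m_\infty}\int_0^\infty e^{-x}F(x)dx .
$$

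For the first integral, note that $F'(x)=-2\frac{1-e^{-x}}xF(x)$, so $\frac{2(1-e^{-x})}{x}F(x)=-F'(x)$. Its integral over $[0,\infty)$ is $F(0)-F(\infty)=1$, since $\int_0^x\frac{1-e^{-y}}ydy\sim\log x\to\infty$.

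For the second integral, write $e^{-x}=1-(1-e^{-x})$. This gives $2\int_0^\infty e^{-x}F\,dx=2\int_0^\infty F\,dx-2\int_0^\infty(1-e^{-x})F\,dx$, and the first term equals $2m_\infty$ by \eqref{minfty}. For the remaining term, $(1-e^{-x})F=-\tfrac x2F'$, so integration by parts gives $2\int_0^\infty(1-e^{-x})F\,dx=-\int_0^\infty xF'dx=\int_0^\infty F\,dx=m_\infty$. The boundary term vanishes because $F(x)=O(x^{-2})$, so $xF(x)\to0$. Hence $2\int_0^\infty e^{-x}F\,dx=2m_\infty-m_\infty=m_\infty$.

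Putting these together, the mean equals $\frac{1}{m_\infty}(1-m_\infty)$, as claimed. The numerical value follows from $m_\infty\approx0.7476$.

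The only delicate points are the justifications: Tonelli, the limit $F(\infty)=0$, and the decay of $xF(x)$ needed for the boundary term. The decay holds because $\int_0^x\frac{1-e^{-y}}ydy=\log x+\gamma_E+o(1)$, which makes $F(x)\asymp x^{-2}$.

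As a cross-check, or as an alternative proof, one can use \eqref{dandg}. The mean of $\mu^{\text{gaps}}_k$ is $\frac{k}{m_k}\sum_l\frac{l}{k-1}g_{k;l}=\frac{k}{k-1}\cdot\frac{1-m_k}{m_k}\to\frac{1-m_\infty}{m_\infty}$. Theorem \ref{weakconvgapsthm} gives weak convergence of the measures, and since all of them live on $[0,1]$ the function $\gamma$ is bounded and continuous there, so the means converge to the mean of $\mu^{\text{gaps}}_\infty$. This route is even shorter and needs nothing beyond what the paper has already stated.
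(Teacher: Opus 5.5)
Your proposal is correct. Your second, ``cross-check'' route is exactly the paper's proof. The paper writes the mean of $\mu^{\text{gaps}}_k$ via \eqref{muk}, converts it with the second line of \eqref{dandg} into $\frac{k}{k-1}\cdot\frac{1-m_k}{m_k}$, and passes to the limit using Theorem \ref{weakconvgapsthm}. You are, if anything, more explicit than the paper about the factor $\frac{k}{k-1}$ and about why weak convergence on $[0,1]$ gives convergence of the means.

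Your main route is genuinely different. It never returns to the discrete model. Instead it integrates the density \eqref{densitygaps} directly, using only the identity $F'=-2\frac{1-e^{-x}}{x}F$ for your $F(x)=\exp\left(-2\int_0^x\frac{1-e^{-y}}ydy\right)$ and two integrations by parts. The tail estimate $F(x)\sim e^{-2\gamma_E}x^{-2}$ justifies splitting the integrals and kills the boundary terms. I checked the inner integral $\frac{1-e^{-x}-xe^{-x}}{x^2}$, the identities $\int_0^\infty 2\frac{1-e^{-x}}{x}F\,dx=1$ and $2\int_0^\infty(1-e^{-x})F\,dx=m_\infty$, and the final value $\frac{1}{m_\infty}-1$; all are right.

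The two routes buy different things. The paper's argument is shorter and explains the identity conceptually. It is the continuum shadow of the discrete mass balance $\sum_l lG^{(n)}_{k;l}+M^{(n)}_k=n$ together with the gap count $\sum_l G^{(n)}_{k;l}=M^{(n)}_k/k+1$, i.e.\ mean gap equals uncovered length per car. Your computation shows the identity is an intrinsic property of the limiting density, regardless of how that density was obtained. As a by-product, the relation $2\int_0^\infty(1-e^{-x})F\,dx=m_\infty$ confirms directly that $f_{\mu^{\text{gaps}}_\infty}$ integrates to $1$.
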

\bf\noindent Remark.\rm\ The corollary indicates that for $k$-bonding with very large $k$, the average size of a gap is just slightly more than $\frac13$ the maximum gap size of $k-1$.
\begin{proof}
Using   \eqref{weakconvgaps} and \eqref{muk} for the first equality below and the second line of \eqref{dandg} for the second equality below, we have
$$
\int_0^1\gamma f_{\mu^{\text{gaps}}_\infty}(\gamma)d\gamma=\lim_{k\to\infty}\frac k{m_k}\sum_{l=1}^{k-1}\frac l{k-1}g_{k;l}=\frac{1-m_\infty}{m_\infty}.
$$
\end{proof}

 In the R\'enyi parking problem on an interval of length $L$, gaps between parked cars are formed, with gap lengths in the interval $[0,1)$.
 B\'ank\"ovi \cite{B62} carefully constructed the appropriate probability space for this problem
and then defined a gap chosen uniformly at random from all of the existing gaps.  The distribution of this gap is of course a probability measure on $[0,1]$. He showed that this gap distribution converges
weakly as $L\to\infty$ to the probability measure $\mu^{\text{gaps}}_\infty$ defined in \eqref{gapmeasinfty}.
Thus, Theorem \ref{weakconvgapsthm} shows that the appropriately scaled  empirical  distribution of expected gaps in the discrete problem on the lattice converges as $k\to\infty$ to the corresponding appropriate object for the R\'enyi problem on the line.

\begin{figure}[htbp]
    \centering
    \includegraphics[scale=.20]{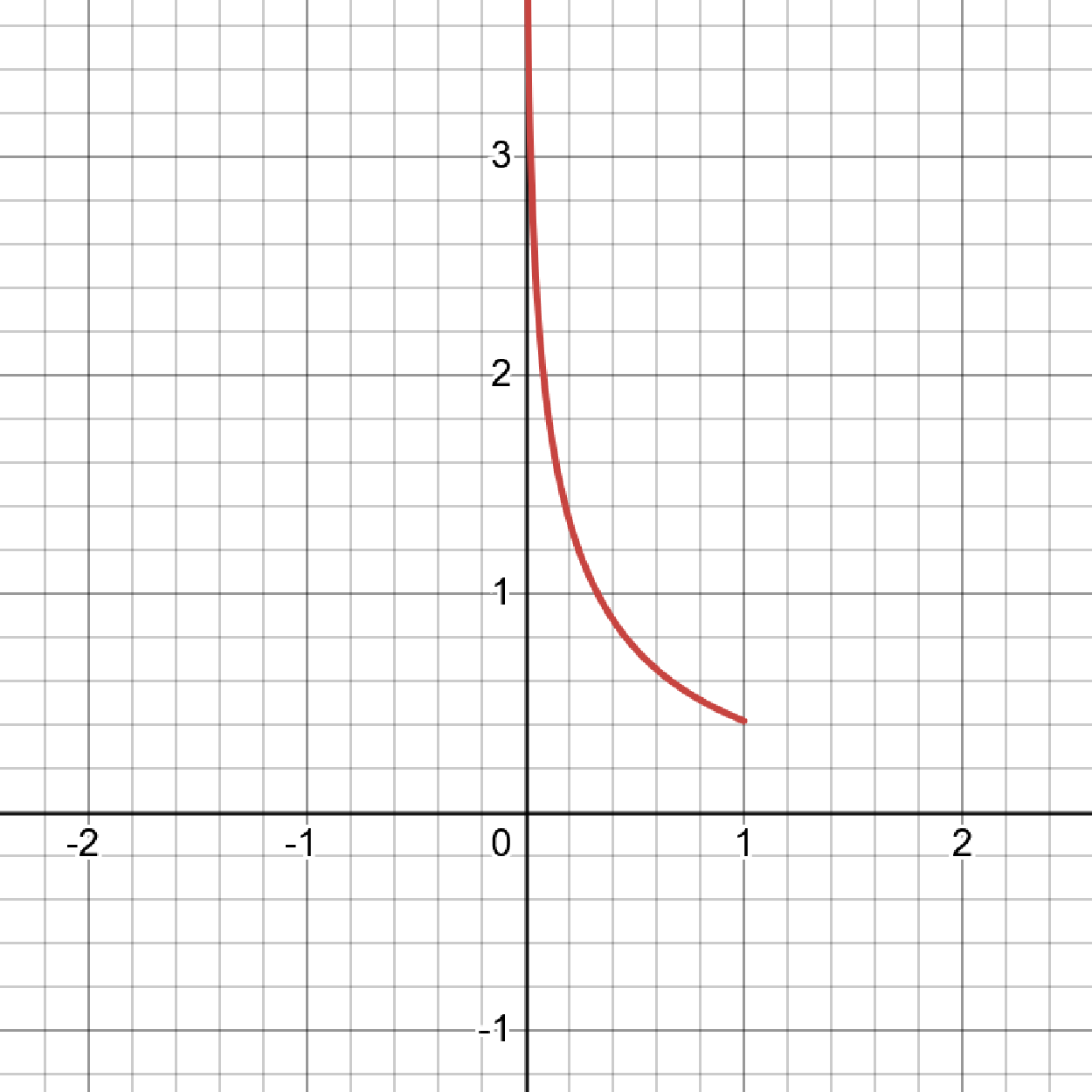}
    \caption{The density $f_{\mu^{\text{gaps}}_\infty}$ of $\mu^{\text{gaps}}_\infty$ from Theorem \ref{weakconvgapsthm}.}
    \label{fig:density}
\end{figure}
\medskip

We now turn to
 a study of the discrete bonding problem when both $k_1$-bonding  and $k_2$-bonding occur, with $2\le k_1<k_2$.
We will consider two different models. In the first model, $k_2$ bonding is performed to its completion on a row of $n$ molecules. This leaves gaps of sizes between 0 and $k_2-1$. Now $k_1$-bonding is implemented on these gaps.
Then as before, we let $n\to\infty$.
In the second model,  at first $k_1$ and $k_2$ bonding occur together. At each step, one considers all of the possible $k_1$-tuples and all of the possible $k_2$-tuples, chooses one of them uniformly at random and implements the bond. When there are  no longer  any unbonded $k_2$-tuples,   $k_1$-bonding continues on all the remaining gaps, whose lengths are of course at most $k_2-1$. Then we let $n\to\infty$.
Both models can be considered to have two stages. In model I, the first stage is $k_2$-bonding and the second stage is $k_1$-bonding on the remaining gaps of lengths less than $k_2$. In model II, the first stage is competitive $k_1$ and $k_2$-bonding, continuing until there are no longer any unbonded $k_2$-tuples. The second stage is $k_1$-bonding on the remaining gaps of lengths less than $k_2$.
We will be interested in results for $k_1$ and $k_2$ fixed, for $k_1$  fixed and $k_2\to\infty$, and for $k_1,k_2\to\infty$ at certain relative rates.
Below we display the definitions of the two models and define the notation for the expected values of the number of bonded molecules:
\begin{equation}\label{models}
\begin{aligned}
&\text{\bf Model I\rm: Let}\ 2\le k_1<k_2\le n. \ \text{Implement}\ k_2\text{-bonding on the line of }\ n\\
&\text{molecules, and when it is completed, implement }\ k_1\text{-bonding on
the }\\
&\text{resulting gaps. Let}\  M^{(n)}_{k_1,k_2;I}\ \text{ denote the expected number of bonded}\\
&\text{molecules. Now let}\ n\to\infty.\\
&\text{\bf Model II\rm: Let}\ 2\le k_1<k_2\le n. \ \text{Implement}\ k_1\ \text{and}\ k_2\text{-bonding together on}\\
&\text{the line of }\ n\
\text{molecules---at each step choose uniformly at random from }\\
&\text{all of the possible}\ k_1\text{-tuples and }\ k_2\text{-tuples, and bind the molecules. When}\\
&\text{there are no longer any unbonded}\ k_2\text{-tuples, continue with }\ k_1\text{-bonding.}\\
&\text{Let}\  M^{(n)}_{k_1,k_2;II}\ \text{ denote the expected number of bonded molecules, and let }\\
&M^{(n)}_{k_1,k_2;II_1}\ \text{denote the expected number of bonded molecules at the end of}\\
&\text{stage one, when there are no longer any unbonded}\ k_2\text{-tuples. Now let}\ n\to\infty.
\end{aligned}
\end{equation}
\bf\noindent Remark.\rm\ Note that the parallel of $M^{(n)}_{k_1,k_2;II_1}$ for model I is $M^{(n)}_{k_2}$, the expected number of bonded molecules for $k_2$-bonding on a row of $n$ molecules.

We now consider $\lim_{n\to\infty}\frac{M^{(n)}_{k_1,k_2;I}}n$ and $\lim_{n\to\infty}\frac{M^{(n)}_{k_1,k_2;II}}n$, the limiting expected density of bonded molecules as the number of molecules increases to infinity in the two models, and also
$\lim_{n\to\infty}\frac{M^{(n)}_{k_1,k_2;II_1}}n$, the  limiting expected density of bonded molecules at the end of stage one in model II, as the number of molecules increases to infinity. Of course, the limiting expected density of bonded molecules at the end of stage one in model I as the number of molecules increases to infinity is
$\lim_{n\to\infty}\frac{M^{(n)}_{k_2}}n=m_{k_2}$, which appears in Theorem \ref{mkthm}.

For Model I, we have the following theorem, whose proof is immediate in light of Theorems \ref{mkthm} and \ref{gapsthm}.
\begin{theorem}\label{k1k2Ithm}
Let $M^{(n)}_{k_1,k_2;I}$ denote the expected number of bonded molecules under Model I in \eqref{models}.
Then
\begin{equation}\label{k1k2I}
\begin{aligned}
&m_{k_1,k_2;I}:=\lim_{n\to\infty}\frac{M^{(n)}_{k_1,k_2;I}}n=m_{k_2}+\sum_{l=k_1}^{k_2-1}g_{k_2;l}M_{k_1}^{(l)},
\end{aligned}
\end{equation}
where $m_{k_2}$ is as in Theorem \ref{mkthm},  $g_{k;m}$ is as in Theorem \ref{gapsthm} and $M_{k_1}^{(l)}$ is the expected number of bonded molecules under $k_1$-bonding on a row of $l$ molecules.
\end{theorem}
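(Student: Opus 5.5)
The formula should follow from conditioning on the configuration left at the end of the first stage. Let $\mathcal{F}$ be the $\sigma$-field generated by the first stage of Model I, i.e.\ $k_2$-bonding run to completion on the row of $n$ molecules. At the end of this stage the unbonded molecules split into maximal gaps. For each $l\in\{0,1,\cdots,k_2-1\}$ there are $X^{(n)}_{k_2;l}$ gaps of size $l$.

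The key structural observation concerns the second stage, which is $k_1$-bonding on these gaps. Conditioned on $\mathcal{F}$, it acts independently on each gap, and on a gap of size $l$ it has the same law as $k_1$-bonding on a row of $l$ molecules. Two facts give this. First, $k_1$-tuples cannot straddle a bonded $k_2$-block, so every available $k_1$-tuple lies inside a single gap. Second, choosing uniformly among all available $k_1$-tuples across all gaps, and then restricting to one gap, gives uniform choice among that gap's tuples. Consequently, the sequence of bonds within one gap, viewed on its own, is exactly $k_1$-bonding run to completion on that gap. Only the final number of bonded molecules matters, not the interleaving of bonds between different gaps, so the expected number bonded in a gap of size $l$ is $M^{(l)}_{k_1}$. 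Here $M^{(l)}_{k_1}=0$ for $l<k_1$.

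Taking conditional expectations therefore gives
$$
E\bigl(\text{bonded in stage two}\mid\mathcal{F}\bigr)=\sum_{l=k_1}^{k_2-1}X^{(n)}_{k_2;l}M^{(l)}_{k_1}.
$$
Taking expectations and adding the first-stage contribution yields
$$
M^{(n)}_{k_1,k_2;I}=M^{(n)}_{k_2}+\sum_{l=k_1}^{k_2-1}G^{(n)}_{k_2;l}M^{(l)}_{k_1}.
$$
Dividing by $n$ and letting $n\to\infty$, Theorem \ref{mkthm} gives $M^{(n)}_{k_2}/n\to m_{k_2}$ and Theorem \ref{gapsthm} gives $G^{(n)}_{k_2;l}/n\to g_{k_2;l}$. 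The sum is finite with coefficients $M^{(l)}_{k_1}$ independent of $n$, so the limit passes termwise and produces \eqref{k1k2I}.

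The main obstacle is the second step, justifying rigorously the independence and uniformity of the stage-two dynamics within each gap. I would handle it by a direct induction on the number of bonds. At each step, conditioned on the history, the next bond lands in a given gap with some probability, and given that it does, its position is uniform among that gap's available tuples. Hence the embedded jump chain on each gap is the single-gap $k_1$-bonding process, and its terminal state has the correct law.
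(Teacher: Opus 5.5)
Your proposal is correct and follows the paper's route. You condition on the outcome of the $k_2$-stage, note that $k_1$-bonding then acts on each $l$-gap exactly as on a row of $l$ molecules, use linearity of expectation to get $M^{(n)}_{k_1,k_2;I}=M^{(n)}_{k_2}+\sum_{l=k_1}^{k_2-1}G^{(n)}_{k_2;l}M^{(l)}_{k_1}$, and pass to the limit via Theorems \ref{mkthm} and \ref{gapsthm}; the only difference is that you justify the decoupling of the second stage across gaps with the embedded-chain argument, which the paper takes as evident from the description of the model.
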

\begin{proof}
In Model 1, first $k_2$-bonding is implemented. When it is completed, between any two consecutive  $k$-tuples, there will be a gap of size
 of between 0 and  $k_2-1$. On these gaps, $k_1$-bonding is implemented. The proof follows from this description along with Theorems \ref{mkthm} and \ref{gapsthm} and the linearity of the expectation.
\end{proof}
We will prove the following theorem for Model II.
\begin{theorem}\label{k1k2IIthm}
Let $M^{(n)}_{k_1,k_2;II}$ denote the expected number of bonded molecules under Model II in \eqref{models},
and let $M^{(n)}_{k_1,k_2;II_1}$ denote the expected number of bonded molecules at the end of stage one, when there are no longer any unbonded $k_2$-tuples.
Then
\begin{equation}\label{k1k2II1}
\begin{aligned}
&m_{k_1,k_2;II_1}:=\lim_{n\to\infty}\frac{M^{(n)}_{k_1,k_2;II_1}}n=\\
&\frac12\int_0^1s^{\frac{k_2-k_1}2}\big(k_1+k_2+k_1(k_2-k_1)(1-s)\big)\exp\left(\sum_{j=1}^{k_1-1}\frac{s^j-1}j+\sum_{j=1}^{k_2-1}\frac{s^j-1}j\right) ds,
\end{aligned}
\end{equation}
and
\begin{equation}\label{k1k2II}
m_{k_1,k_2;II}:=\lim_{n\to\infty}\frac{M^{(n)}_{k_1,k_2;II}}n=m_{k_1,k_2;II_1}+
\int_0^1B(s)\exp\left(\sum_{j=1}^{k_1-1}\frac{s^j-1}j+\sum_{j=1}^{k_2-1}\frac{s^j-1}j\right)ds,
\end{equation}
where
\begin{equation}\label{B(s)}
\begin{aligned}
&B(s)=(1-s)^2s^{-\frac12k_1-\frac12k_2}\left(\sum_{n=\max(k_2,2k_1)}^{k_2+k_1-1}\big(\sum_{j=k_1}^{n-k_1}M^{(j)}_{k_1}\big)s^n+\sum_{n=k_2+k_1}^{2k_2-1}\big(\sum_{j=k_1}^{n-k_2}M^{(j)}_{k_1}\big)s^n\right)+\\
&(1-s)\left(s^{\frac12k_2+\frac12k_1}+s^{\frac32k_2-\frac12k_1}\right)\left(\sum_{j=k_1}^{k_2-1}M^{(j)}_{k_1}\right),
\end{aligned}
    \end{equation}
and $M_{k_1}^{(j)}$ is the expected number of bonded molecules under $k_1$-bonding on a row of $j$ molecules.
\end{theorem}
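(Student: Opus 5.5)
The plan is to use the standard recursion/generating-function method that underlies Theorems \ref{mkthm} and \ref{gapsthm}. Conditioning on the first bond yields a linear recursion for the expected quantity on a row of $n$ molecules. The recursion then becomes a first-order linear ODE for the generating function, and a Tauberian-type argument (monotone density, i.e. the coefficient-extraction for generating functions with a double pole at $s=1$) turns the behaviour of the solution at $s=1$ into the limit of the quantity divided by $n$.

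Consider stage one of Model II on a row of $n$ molecules. There are $N_n=(n-k_1+1)^+ +(n-k_2+1)^+$ available tuples. If the chosen tuple is a $k_i$-tuple starting at position $j+1$, the row splits into independent rows of lengths $j$ and $n-k_i-j$. Write $a_n=M^{(n)}_{k_1,k_2;II_1}$. Then
\[
N_n a_n=\sum_{i=1}^2\sum_{j=0}^{n-k_i}\left(k_i+a_j+a_{n-k_i-j}\right)=\sum_{i=1}^2\left((n-k_i+1)^+k_i+2\sum_{j=0}^{n-k_i}a_j\right),
\]
with $a_n=0$ for $n<k_1$. Stage one stops only when no $k_2$-tuple remains, so for $k_1\le n<k_2$ the recursion is just ordinary $k_1$-bonding. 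I would check that this is consistent, since the formula automatically handles it once the terms are truncated properly. Set $A(s)=\sum a_ns^n$. Since $N_n=2n-k_1-k_2+2$ on the full range, multiplying by $s^n$ and summing gives
\[
2sA'-(k_1+k_2-2)A=\frac{2(s^{k_1}+s^{k_2})}{1-s}A+\text{(explicit rational terms)}.
\]
The factor $s^{-(k_1+k_2)/2}$ arises as the integrating factor of $2sA'-(k_1+k_2)A$. Together with $\frac{s^{k}}{1-s}=\frac{s}{1-s}-\sum_{j=1}^{k-1}s^j$, which integrates to $-\log(1-s)-\sum s^j/j$, this produces the exponential factor $\exp\big(\sum_{j<k_1}\frac{s^j-1}{j}+\sum_{j<k_2}\frac{s^j-1}{j}\big)$ multiplied by $(1-s)^{-2}$.

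Solving gives $A(s)=\frac{s^{(k_1+k_2)/2}}{(1-s)^2}e^{-(\dots)}\int_0^s(\text{stuff})\,du$. The limit $\lim_n a_n/n$ equals $\lim_{s\to1}(1-s)^2A(s)$, justified as in \cite{P14}: $a_n$ is subadditive-like and $0\le a_n\le n$, so the coefficient asymptotics follow. Evaluating this limit yields \eqref{k1k2II1}. The polynomial $k_1+k_2+k_1(k_2-k_1)(1-s)$ should come from the source term $\sum_i k_i(n-k_i+1)^+s^n$, after the boundary discrepancy for $k_1\le n<k_2$ is absorbed.

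For stage two, I would write $b_n=M^{(n)}_{k_1,k_2;II}$. It satisfies the same recursion, except that its initial data is $b_n=M^{(n)}_{k_1}$ for $n<k_2$ instead of $0$. So $b_n-a_n$, or more conveniently $b_n$ directly, satisfies the same ODE with an additional inhomogeneity coming from the terms where a piece of length $j<k_2$ is left over and then receives $k_1$-bonding. The rows ending stage one with length $j\in[k_1,k_2-1]$ contribute $M^{(j)}_{k_1}$. Collecting these contributions term by term, for $n$ in the ranges $[\max(k_2,2k_1),k_1+k_2-1]$ and $[k_1+k_2,2k_2-1]$ according to which tuple type can have leftover pieces of admissible length on both sides, produces the polynomial in $B(s)$. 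The terms $s^{\frac12k_2+\frac12k_1}$ and $s^{\frac32k_2-\frac12k_1}$ come from the tail $n\ge 2k_2$, where the inner sums stabilize to $\sum_{j=k_1}^{k_2-1}M^{(j)}_{k_1}$ and the generating series sums geometrically, giving the factor $(1-s)$ instead of $(1-s)^2$. Applying the same integrating factor and the limit $s\to1$ then gives \eqref{k1k2II}.

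I expect the main obstacle to be the bookkeeping of the low-index boundary terms. This covers the regimes $n<k_2$ versus $n\ge k_2$ and the exact ranges of $j$ for which both side pieces lie in $[k_1,k_2-1]$, and these ranges produce the precise form of $B(s)$ and the polynomial prefactor in \eqref{k1k2II1}. I would handle it by writing the source term as a finite polynomial plus a tail of the form $c\,s^{m}/(1-s)$ or $c\,s^m/(1-s)^2$ and verifying each piece separately.
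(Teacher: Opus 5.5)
Your overall route is the paper's: condition on the first bond, pass to a generating function, solve a first-order linear ODE using $\frac{t^{k-1}}{1-t}=\frac1{1-t}-\sum_{i=0}^{k-2}t^i$, take $\lim_{t\to1^-}(1-t)^2(\cdot)$, and split source and boundary-data contributions by linearity. However, your stage-one setup is wrong as written, and carried out faithfully it proves the wrong formula.

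On a row of $n<k_2$ molecules there is no $k_2$-tuple, so stage one is over before it begins. Hence $a_n=M^{(n)}_{k_1,k_2;II_1}=0$ for every $n<k_2$, and the recursion applies only for $n\ge k_2$. You instead set $a_n=0$ only for $n<k_1$ and let the recursion run as ordinary $k_1$-bonding on $k_1\le n<k_2$, which forces $a_n=M^{(n)}_{k_1}$ there. Your $a_n$ then has the same recursion for $n\ge k_2$ and the same boundary values as $M^{(n)}_{k_1,k_2;II}$, so it \emph{is} the full Model II count. The limit you would compute is the entire right side of \eqref{k1k2II}, which exceeds \eqref{k1k2II1} by $\int_0^1B(s)\exp(\cdots)\,ds>0$. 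This also contradicts your stage-two paragraph, where the stage-one data for $n<k_2$ is said to be $0$. Relatedly, $N_n=2n-k_1-k_2+2$ ``on the full range'' is false for $k_1\le n<k_2$, where $N_n=n-k_1+1$. So summing over all $n$ does not produce the operator $2sA'-(k_1+k_2-2)A$.

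The repair, which is what the paper does, is to set $m(t)=\sum_{n\ge k_2}a_nt^n$, multiply the recursion by $(2n-k_1-k_2+2)t^n$, and sum over $n\ge k_2$ only. Every value with index below $k_2$ is then treated as boundary data. The ``boundary discrepancy'' you left unresolved is exactly that the source is summed from $n=k_2$, not from $n=k_1$:
\[
\sum_{n\ge k_2}\big(k_1(n-k_1+1)+k_2(n-k_2+1)\big)t^n=\big(k_1+k_2+k_1(k_2-k_1)(1-t)\big)\frac{t^{k_2}}{(1-t)^2}.
\]
With zero data below $k_2$ this is the only inhomogeneity, and it gives \eqref{k1k2II1}. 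The data $M^{(j)}_{k_1}$, $k_1\le j\le k_2-1$, adds the terms that become $B(s)$.

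There are also two bookkeeping slips. First, the homogeneous integrating factor is $s^{-(k_1+k_2-2)/2}$; the power $s^{-\frac12k_1-\frac12k_2}$ in $B$ appears only after combining with $t^{n-1}$. Second, the $s^{\frac12k_2+\frac12k_1}$ term comes from the tail $n\ge k_1+k_2$ of the $k_1$-sum; only the $s^{\frac32k_2-\frac12k_1}$ term comes from $n\ge 2k_2$.

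Finally, ``subadditive-like and $0\le a_n\le n$'' is not a Tauberian hypothesis. A clean argument is available. Since $a_n\ge0$ and $(1-t)^2m(t)\to c$, Karamata's theorem gives $S_n:=\sum_{j\le n}a_j\sim cn^2/2$. The recursion $(2n-k_1-k_2+2)a_n=O(n)+2S_{n-k_1}+2S_{n-k_2}$ then yields $a_n/n\to c$.
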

\bf\noindent Remark.\rm\ The  explicit integral expressions obtained in Theorems \ref{k1k2Ithm} for $m_{k_1,k_2;I}$ and in  Theorem \ref{k1k2IIthm} for $m_{k_1,k_2;II}$ depend on the values
of $M_{k_1}^{(j)}$, for $j\in\{k_1,\cdots, k_2-1\}$. (In Theorem \ref{k1k2Ithm} the integral expressions arise through the terms $g_{k_2;l}$.) For small $k_2$, these values can be figured out  directly. In the case of general $k_2$, if $k_2\le 2k_1$, then from the definition of the $k_1$-bonding model, all of these values are immediate: $M^{(j)}_{k_1}=k_1,\ j=k_1,\cdots, k_2-1$.

Table \ref{table:2}    compares the values $m_{k_1}, m_{k_1,k_2,I}$ and $m_{k_1,k_2,II}$ for various values of $k_1$, with $k_2=k_1+1$ and with $k_2=2k_1$.
We conjecture that $m_{k_1;k_2;I}$ and $m_{k_1,k_2;II}$ are increasing in 
$k_2$ for $k_2>k_1\ge 2$.

\begin{table}[h!]
\centering
\begin{tabular}{||c c c  c c c||}
 \hline
 $k_1$& $m_{k_1}$& $m_{k_1,k_1+1;I}$&$m_{k_1,2k_1;I}$& $m_{k_1,k_1+1;II}$&$m_{k_1,2k_1;II}$\\ [0.5ex]
 \hline\hline
 2&0.865&0.923&0.924&0.892&0.894\\
3&0.824&0.881&0.900&0.850&0.860\\
5&0.793&0.837&0.880&0.813&0.833\\
10&0.770&0.796&0.866&0.782&0.814\\
100&0.750&0.753&0.854&0.751&0.796\\
 \hline
\end{tabular}
\vspace{1ex}
\caption{Limiting bonding densities under  various regimes from Theorems \ref{mkthm}, \ref{k1k2Ithm} and \ref{k1k2IIthm}.}
\label{table:2}
\end{table}

In the remark after Theorem \ref{k1k2IIthm}, it was seen  that one can calculate generically $M_{k_1}^{(j)}$ essentially only when $j\le 2k_1$. (We say ``essentially'' because actually it is easy to figure out by hand
$M_{k_1}^{(j)}$  for $j=2k_1+1$, and then progressively harder for $j=2k_1+2,\cdots$.) But Theorem \ref{mkthm} gives the asymptotic behavior of $M_{k_1}^{(j)}$ as $j\to\infty$.
In the formulas for $m_{k_1,k_2;I}$ and $m_{k_1,k_2;II}$, the influence of $M_{k_1}^{(j)}$ for any particular $j$ becomes negligible as $k_2\to\infty$.
(This can be seen easily in Theorem \ref{k1k2Ithm} since $\lim_{k_2\to\infty}g_{k_2;l}=0$, for any $l$. That this limit is zero can be seen by considering \eqref{gapstoL} with the term $\sum_{l=0}^Ls^l$ replaced by $s^l$.)
This allows us to evaluate the limits $\lim_{k_2\to\infty}m_{k_1,k_2;I}$ and  $\lim_{k_2\to\infty}m_{k_1,k_2;II}$ for any fixed $k_1$.
We can also evaluate  the limits
$\lim_{k_1\to\infty}m_{k_1,Lk_1;I}$
and $\lim_{k_1\to\infty}m_{k_1,Lk_1;II}$ with $L\in(1,2]$. The reason that  $k_2=Lk_1$ is restricted to $L\in(1,2]$ is  the issue noted in the first line of this paragraph.
We begin with the results for fixed $k_1$.
\begin{theorem}\label{mk1k2inftyIthm}
Let $m_{k_1;k_2;I}$  be as in Theorem \ref{k1k2Ithm}.
One has
\begin{equation}\label{mk1k2inftyI}
m_{k_1,\infty;I}:=\lim_{k_2\to\infty}m_{k_1,k_2;I}=m_\infty+(1-m_\infty)m_{k_1}\approx0.7476+0.252\thinspace m_{k_1}.
\end{equation}
Thus also,
\begin{equation*}\label{mdoubleI}
\lim_{k_1\to\infty}m_{k_1,\infty;I}=m_\infty+(1-m_\infty)m_\infty\approx0.9363.
\end{equation*}
\end{theorem}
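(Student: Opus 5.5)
Starting from Theorem \ref{k1k2Ithm}, we have $m_{k_1,k_2;I}=m_{k_2}+\sum_{l=k_1}^{k_2-1}g_{k_2;l}M^{(l)}_{k_1}$. Since $m_{k_2}\to m_\infty$, it suffices to prove
\begin{equation*}
\lim_{k_2\to\infty}\sum_{l=k_1}^{k_2-1}g_{k_2;l}M^{(l)}_{k_1}=(1-m_\infty)m_{k_1}.
\end{equation*}
By Theorem \ref{mkthm}, $M^{(l)}_{k_1}=l\,m_{k_1}+\epsilon_l\,l$ with $\epsilon_l\to0$ as $l\to\infty$. The leading term is therefore $m_{k_1}\sum_{l=k_1}^{k_2-1}lg_{k_2;l}$. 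By the second line of \eqref{dandg} this equals
\begin{equation*}
m_{k_1}\Bigl(1-m_{k_2}-\sum_{l=1}^{k_1-1}lg_{k_2;l}\Bigr),
\end{equation*}
which tends to $m_{k_1}(1-m_\infty)$. Here I use that $g_{k_2;l}\to0$ as $k_2\to\infty$ for each fixed $l$, as noted before the theorem. To verify this directly from \eqref{lgaps}: $g_{k_2;l}\le 2\int_0^1\exp\bigl(2\sum_{j=1}^{k_2-1}\frac{s^j-1}j\bigr)ds=\frac{2m_{k_2}}{k_2}\to0$.

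It remains to control the error $\sum_{l=k_1}^{k_2-1}g_{k_2;l}\,l\,\epsilon_l$. Fix $\delta>0$ and choose $N$ such that $|\epsilon_l|<\delta$ for $l>N$. The terms with $l>N$ contribute at most $\delta\sum_l lg_{k_2;l}\le\delta$. The terms with $l\le N$ contribute at most $C_N\sum_{l\le N}g_{k_2;l}$, since $|\epsilon_l|l\le M^{(l)}_{k_1}+lm_{k_1}\le 2l$. This finite sum of terms tends to $0$ as $k_2\to\infty$ by the bound above. Letting $k_2\to\infty$ and then $\delta\to0$ gives the claim.

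The second statement follows immediately from $m_{k_1}\to m_\infty$, which is \eqref{minfty}. The numerical value is $m_\infty(2-m_\infty)\approx0.9363$.

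The only step that could be an obstacle is the uniform error control. It is handled by splitting at $N$, using the total mass bound $\sum lg_{k_2;l}\le1$ together with the pointwise vanishing of $g_{k_2;l}$. No weak convergence result (Theorem \ref{weakconvgapsthm}) is needed, since the relevant weights $lg_{k_2;l}$ sum exactly to $1-m_{k_2}$.
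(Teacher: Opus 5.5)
Your proof is correct, but it takes a more direct route than the paper's. The paper rewrites the sum as $m_{k_2}\cdot\frac{k_2-1}{m_{k_2}}\sum_{l}\frac{l}{k_2-1}g_{k_2;l}\frac{M^{(l)}_{k_1}}{l}$. It then uses the weak convergence of Theorem~\ref{weakconvgapsthm} and Corollary~\ref{expvalf} to show that the $l$-weighted gap mass tends to $\int_0^1\gamma f_{\mu^{\text{gaps}}_\infty}(\gamma)\,d\gamma=\frac{1-m_\infty}{m_\infty}$. Finally it cites $M^{(l)}_{k_1}/l\to m_{k_1}$, without saying why the small-$l$ terms (where this ratio is not near $m_{k_1}$) are negligible.

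You instead use the exact identity $\sum_l lg_{k_2;l}=1-m_{k_2}$ from \eqref{dandg} directly, with the elementary bound $g_{k_2;l}\le 2m_{k_2}/k_2$. You also make the replacement of $M^{(l)}_{k_1}$ by $l\,m_{k_1}$ rigorous through the split at $N$. Corollary~\ref{expvalf} is itself proved from that same identity, so the weak-convergence step is not needed here. It only names the limit as the mean of the limiting gap law before evaluating it back via \eqref{dandg}.

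Your version is shorter, handles the error term more carefully, and uses only Theorems~\ref{mkthm}, \ref{gapsthm} and \ref{k1k2Ithm}, the ingredients named in the remark after the theorem. The paper's framing buys an interpretation of $(1-m_\infty)m_{k_1}$ as (gap density)$\times$(mean scaled gap)$\times m_{k_1}$. It also gives a method that would still work if $M^{(l)}_{k_1}/l$ tended to a nonconstant function of $l/k_2$, where a single exact sum identity would no longer suffice.
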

\bf\noindent Remark.\rm\
Theorem \ref{mk1k2inftyIthm} has a  quick proof using Theorems \ref{mkthm}, \ref{gapsthm} and \ref{k1k2Ithm}, but we don't present it here in order not to interrupt the exposition.
Theorem \ref{mk1k2inftyIthm}
 is easy to explain intuitively. Fix a very large $k_2$ and let $n$, the total number of molecules in the row, be much larger than $k_2$.
  In Model I, first $k_2$-bonding is implemented.
Since $n$ is much larger than $k_2$, by Theorem \ref{mkthm} the percentage of bonded molecules is close to $m_{k_2}$, and since $k_2$ is very large, $m_{k_2}$ is close to
$m_\infty$. This accounts for the term $m_\infty$ in \eqref{mk1k2inftyI}.
Now $k_1$-bonding is implemented on the gaps. The percentage of the original $n$  molecules that are in these gaps  is close to $1-m_{k_2}$, which in turn is close to $1-m_\infty$.
Since the limiting density of gaps $g_{k_2;l}$ as $n\to\infty$ satisfy $\lim_{k_2\to\infty}g_{k_2;l}=0$, for any fixed $l$, it follows that for our very large $n$, after the
$k_2$-bonding is implemented, most of the gaps on which $k_1$-bonding is now  implemented are large.
Thus, by Theorem \ref{mkthm}, the percentage of molecules  that were unbonded after the $k_2$-bonding was completed and which now get bonded during the $k_1$-bonding phase is  close to  $m_{k_1}$. This accounts for the term $m_{k_1}(1-m_\infty)$ in \eqref{mk1k2inftyI}.
\begin{theorem}\label{mk1k2inftyIIthm}
Let $m_{k_1;k_2;II}$ and $m_{k_1;k_2;II_1}$ be as in Theorem \ref{k1k2IIthm}.
One has
\begin{equation}\label{mk1k2inftyII_1}
m_{k_1,\infty;II_1}:=\lim_{k_2\to\infty}m_{k_1,k_2;II_1}=D\approx 0.4166,
\end{equation}
and
\begin{equation}\label{mk1k2inftyII}
m_{k_1,\infty;II}:=\lim_{k_2\to\infty}m_{k_1,k_2;II}=D+(1-D)m_{k_1}\approx 0.4166+0.5834\thinspace m_{k_1},
\end{equation}
where
\begin{equation}\label{D}
D=\frac12\int_0^\infty e^{-\frac12x}e^{-\int_0^x\frac{1-e^{-y}}ydy}dx\approx0.4166.
\end{equation}
Thus also,
\begin{equation*}\label{mdoubleII}
\lim_{k_1\to\infty}m_{k_1,\infty;II}=D+(1-D)m_\infty\approx0.8528.
\end{equation*}
\end{theorem}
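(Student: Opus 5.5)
Starting from the explicit formulas of Theorem \ref{k1k2IIthm}, I would take $k_2\to\infty$ with $k_1$ fixed, using the substitution $s=1-\frac x{k_2}$ as in Bureaux's proof of \eqref{minfty}.

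\emph{Limit of the stage-one density.} With $s=1-x/k_2$, one has $\sum_{j=1}^{k_2-1}\frac{s^j-1}j\to-\int_0^x\frac{1-e^{-y}}ydy$ and $s^{(k_2-k_1)/2}\to e^{-x/2}$. Since $k_1$ is fixed, $\sum_{j=1}^{k_1-1}\frac{s^j-1}j\to0$. In the prefactor, $k_1+k_2+k_1(k_2-k_1)(1-s)=k_2+O(1)+O(x)$, so after including $ds=dx/k_2$ the integrand in \eqref{k1k2II1} converges pointwise to $\frac12e^{-x/2}e^{-\int_0^x\frac{1-e^{-y}}ydy}$. For dominated convergence I would use $s^{(k_2-k_1)/2}\le e^{-(k_2-k_1)x/(2k_2)}\le e^{-x/4}$ for large $k_2$, together with the fact that both exponentials are at most $1$. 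The $O(x)$ term in the prefactor is integrable against $e^{-x/4}$ and is multiplied by $1/k_2$, so it vanishes. This gives $m_{k_1,\infty;II_1}=D$, which is \eqref{mk1k2inftyII_1}.

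\emph{Second-stage term.} I would not pass to the limit in $B(s)$ directly. Instead I would reinterpret the second integral in \eqref{k1k2II}. By the derivation of Theorem \ref{k1k2IIthm}, it equals $\sum_{l=k_1}^{k_2-1}h_{k_1,k_2;l}M^{(l)}_{k_1}$, where $h_{k_1,k_2;l}$ is the limiting density of $l$-gaps left at the end of stage one. This is the analogue of Theorem \ref{k1k2Ithm}. The proof of Theorem \ref{k1k2IIthm} gives these as integrals of the same form as \eqref{lgaps}, and they satisfy the conservation identity $\sum_l l\,h_{k_1,k_2;l}=1-m_{k_1,k_2;II_1}$ from the analogue of \eqref{dandg}. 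Write $M^{(l)}_{k_1}=l\,(m_{k_1}+\epsilon_l)$, where $\epsilon_l\to0$ as $l\to\infty$ by Theorem \ref{mkthm}. The second-stage term then equals $m_{k_1}(1-m_{k_1,k_2;II_1})+\sum_l l\,h_{k_1,k_2;l}\epsilon_l$. Its limit is $m_{k_1}(1-D)$, provided the mass $\sum_{l\le L}l\,h_{k_1,k_2;l}\to0$ as $k_2\to\infty$ for each fixed $L$. For this I would show $h_{k_1,k_2;l}\to0$ for fixed $l$, by the same scaling: the integrand carries $s^l$ times factors of order $1$, and the integral is over a region of width $O(1/k_2)$. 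This is the analogue of the remark that $g_{k_2;l}\to0$. Splitting the sum at a large $L$ with $\sup_{l>L}|\epsilon_l|$ small then finishes the argument and yields \eqref{mk1k2inftyII}. The final statement follows from Theorem \ref{mkthm} and \eqref{minfty}, since $m_{k_1}\to m_\infty$.

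\emph{Main obstacle.} The hard step is the second one: extracting from the proof of Theorem \ref{k1k2IIthm} the stage-one gap densities $h_{k_1,k_2;l}$ and their conservation identity. If that is unavailable, the fallback is to handle $B(s)$ by direct asymptotics. I would substitute $M^{(j)}_{k_1}\approx m_{k_1}j$, so that $\sum_{j=k_1}^{n-k_1}M^{(j)}_{k_1}\approx\frac{m_{k_1}}2n^2$. I would then verify that the resulting integral tends to $m_{k_1}(1-D)$ by the same substitution $s=1-x/k_2$. Each of the two sums over $n$ then has $O(k_2)$ terms of size $O(k_2^2)$, and the prefactors $(1-s)^2$ and $(1-s)$ supply the compensating powers of $k_2$. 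Identifying the constant would presumably require an integration by parts like the one in the Remark after Theorem \ref{gapsthm}.
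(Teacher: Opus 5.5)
Your argument is correct. For $m_{k_1,k_2;II_1}$ it is the paper's argument: the substitution $s=1-x/k_2$ with dominated convergence, and your explicit $e^{-x/4}$ domination is, if anything, more careful. For the second stage you take a genuinely different route.

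\textbf{What the paper does.} It splits the second integral in \eqref{k1k2II} into $II_{k_2}$, $III_{k_2}$ and $IV_{k_2}$. It replaces $M^{(j)}_{k_1}$ by $jm_{k_1}$ inside the sums, using the scaling dichotomy \eqref{expression}, and evaluates each limit explicitly. It then recognizes the total as $(1-D)m_{k_1}$ through the integration-by-parts identities \eqref{integral=0} and \eqref{finalbyparts}.

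\textbf{What you do instead.} You transplant the paper's Model I argument (Theorem \ref{mk1k2inftyIthm}): an exact finite-$k_2$ mass-conservation identity, $h_{k_1,k_2;l}\ge0$, and $h_{k_1,k_2;l}\to0$ for fixed $l$. After that, only $M^{(l)}_{k_1}/l\to m_{k_1}$ is used.

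\textbf{What each route buys.} The paper's integrations by parts are in effect a limiting verification of the very conservation law you use structurally. Your version therefore avoids the $II_{k_2},III_{k_2},IV_{k_2}$ bookkeeping and the asymptotic replacement steps. It also explains directly why the answer has the form $D+(1-D)m_{k_1}$, which is exactly the paper's heuristic remark. The paper's version has the merit of working only from the stated formula \eqref{B(s)}, without introducing stage-one gap densities for Model II.

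\textbf{The step you flag as the main obstacle is routine.}
\begin{itemize}
\item Regrouping $B_2$ in \eqref{odeagain} by the boundary index gives $B_2(s)=\sum_{l=k_1}^{k_2-1}M^{(l)}_{k_1}\frac{s^{\max(k_2,l+k_1)-1}+s^{k_2+l-1}}{1-s}$.
\item This yields explicit nonnegative integrals $h_{k_1,k_2;l}=\int_0^1(1-s)s^{-\frac12(k_1+k_2-2)}\big(s^{\max(k_2,l+k_1)-1}+s^{k_2+l-1}\big)E(s)\,ds$, where $E$ is the exponential factor in \eqref{k1k2II}. For fixed $l$ these are $O(k_2^{-2})$.
\item For the identity $\sum_{l=1}^{k_2-1}l\,h_{k_1,k_2;l}=1-m_{k_1,k_2;II_1}$, note that $f(n)=n$ solves \eqref{k1k2recur} exactly. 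Run the same generating-function computation with boundary data $f(j)=j$ for $j<k_2$, and use $(1-t)^2\sum_{n\ge k_2}nt^n\to1$.
\item This produces the identity directly between the explicit integrals. No separate existence result for $\lim_n H^{(n)}_l/n$ is needed.
\end{itemize}

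One bookkeeping point: the conservation sum also runs over $1\le l<k_1$. There $M^{(l)}_{k_1}=0$, so $\epsilon_l=-m_{k_1}$. These finitely many bounded terms simply go into your $l\le L$ block.
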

\bf\noindent Remark.\rm\ Analogous  to  Theorem \ref{mk1k2inftyIthm} and the remark  following that theorem,   Theorem \ref{mk1k2inftyIIthm} has the following explanation.
In Model II, $k_1$-bonding and $k_2$-bonding  take place in a competing manner until $k_2$ bonding is no longer possible. Then $k_1$-bonding continues alone until it is completed.
Equation \eqref{mk1k2inftyII_1} shows that for fixed $k_1$ and very large $k_2$, at the point that $k_2$-bonding is no longer possible, the percentage of bonded molecules (both $k_1$-bonds and $k_2$-bonds) is approximately $D$. After this, $k_1$ bonding continues to be implemented on the remaining molecules whose percentage is approximately $1-D$.

 We note   two very interesting points concerning the above remark. First of all, the value of $D$ is independent of $k_1$, despite the fact that in Model II, unlike in Model 1 analyzed in Theorem \ref{mk1k2inftyIthm}, $k_1$ and $k_2$ bonding are competing at the same time. Second of all, \it the value of $D$ ($\approx0.4166$) is drastically lower than the value of $m_\infty$ ($\approx0.7476$). This shows that the $k_1$-sized bonds that occur during the competing stage of $k_1$ and
$k_2$ bonding heavily reduce the number of consecutive unbonded $k_2$-tuples available for $k_2$-bonding.\rm\
This carries over heuristically to the continuous parking model of Renyi. Let $k_2$ be very large. Scaling by $k_2$, we think of the $k_2$ bond as being
a clump of bonded molecules of unit length (corresponding to the parked car of length 1). The bonds of length $k_1$, when scaled by $k_2$, now represent minute particles
that interfere with bonding. 

Table \ref{table:3}
compares the values of $m_{k_1}, m_{k_1,\infty;I}$ and $m_{k_1,\infty;II}$ for various values of $k_1$. One might also want to compare Table \ref{table:3} with Table \ref{table:2}.
Tables \ref{table:2} and \ref{table:3} show that for fixed $k_1$, and any choice of $k_2$, or alternatively $k_2\to\infty$, the highest percentage of bonded molecules occurs for Model I $k_1,k_2$-bonding, then for
Model II $k_1,k_2$-bonding, and then the lowest percentage is for the standard $k_1$-bonding.
\begin{table}[h!]
\centering
\begin{tabular}{|| c c c c||}
 \hline
 $k_1$& $m_{k_1}$& $m_{k_1,\infty;I}$&$m_{k_1,\infty;II}$\\ [0.5ex]
 \hline\hline
 2&0.865&0.966&0.921\\
3&0.824&0.956&0.897\\
5&0.792&0.948&0.879\\
10&0.770&0.942&0.866\\
100&0.750&0.937&0.854\\
 \hline
\end{tabular}
\vspace{1ex}
\caption{Limiting bonding densities under $k_1$-bonding and under Model I and Model II $k_1,k_2$-bonding when $k_2\to\infty$,  from Theorems \ref{mkthm}, \ref{mk1k2inftyIthm} and
\ref{mk1k2inftyIIthm}.}
\label{table:3}
\end{table}

We now evaluate $\lim_{k_1\to\infty}m_{k_1,Lk_1;I}$
and $\lim_{k_1\to\infty}m_{k_1,Lk_1;II}$ with $L\in(1,2]$.
\begin{theorem}\label{k1k1LIinftythm}
Let $m_{k_1,k_2;I}$ be as in Theorem \ref{k1k2Ithm}. Then for $L\in (1,2]$,
\begin{equation}\label{k1k1LIinfty}
\begin{aligned}
&m_{\infty,L\infty;I}:=\lim_{k_1\to\infty}m_{k_1,[Lk_1];I}=m_\infty+\frac2L\int_0^\infty\left(e^{-\frac1L x}-e^{-x}\right)e^{-2\int_0^x\frac{1-e^{-y}}ydy}dx.
\end{aligned}
\end{equation}
\end{theorem}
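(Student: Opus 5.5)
We start from Theorem \ref{k1k2Ithm} with $k_2=[Lk_1]$, $L\in(1,2]$. For $k_1\le l\le k_2-1<2k_1$ no second $k_1$-block fits in a gap of length $l$, so $M^{(l)}_{k_1}=k_1$. Hence
$$
m_{k_1,k_2;I}=m_{k_2}+k_1\sum_{l=k_1}^{k_2-1}g_{k_2;l}.
$$
Since $k_2\to\infty$, Theorem \ref{mkthm} and \eqref{minfty} give $m_{k_2}\to m_\infty$. So the whole task is to compute $\lim_{k_1\to\infty} k_1\sum_{l=k_1}^{k_2-1}g_{k_2;l}$.

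By \eqref{lgaps}, the geometric sum over $l$ gives
$$
k_1\sum_{l=k_1}^{k_2-1}g_{k_2;l}=2k_1\int_0^1(s^{k_1}-s^{k_2})\exp\Big(2\sum_{j=1}^{k_2-1}\frac{s^j-1}j\Big)ds.
$$
Substitute $s=e^{-x/k_2}$, so $ds=-\frac1{k_2}e^{-x/k_2}dx$. The prefactor becomes $\frac{2k_1}{k_2}\to\frac2L$. We have $s^{k_1}=e^{-xk_1/k_2}\to e^{-x/L}$ and $s^{k_2}=e^{-x}$. For the exponent, write $\sum_{j=1}^{k_2-1}\frac{s^j-1}{j}=\sum_{j}\frac1{k_2}\frac{e^{-xj/k_2}-1}{j/k_2}$. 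This is a Riemann sum converging to $-\int_0^x\frac{1-e^{-y}}y\,dy$ after the change $y=xt$. This is exactly the mechanism in Bureaux's proof of \eqref{minfty}, which the paper reproduces, and I would reuse that argument. Pointwise, the integrand therefore converges to $\frac2L(e^{-x/L}-e^{-x})e^{-2\int_0^x\frac{1-e^{-y}}ydy}$, which gives the claimed formula \eqref{k1k1LIinfty}.

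The main obstacle is justifying the interchange of limit and integral via dominated convergence on $x\in(0,\infty)$. The factor $(s^{k_1}-s^{k_2})$ is bounded by $1$, so it suffices to dominate $\exp\big(2\sum_{j=1}^{k_2-1}\frac{s^j-1}j\big)$ uniformly in $k_2$ by an integrable function of $x$.
\begin{itemize}
\item I would use the monotonicity of $\frac{1-e^{-u}}u$ to compare the Riemann sum with the integral, giving $\sum_{j=1}^{k_2-1}\frac{1-e^{-xj/k_2}}{j}\ge\int_1^{k_2}\frac{1-e^{-xt/k_2}}t\,dt-C$.
\item For $x\ge1$ this is at least $\log x-C'$, so the exponential is $O(x^{-2})$, which is integrable at infinity.
\item Near $x=0$ everything is bounded by $1$.
\end{itemize}
This is the same bound used in deriving \eqref{minfty} from \eqref{mk}, so I expect it to carry over with only notational changes.
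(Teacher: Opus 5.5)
Your reduction matches the paper's: $M^{(l)}_{k_1}=k_1$ for $k_1\le l\le k_2-1$, $m_{k_2}\to m_\infty$, and the telescoping $(1-s)\sum_{l=k_1}^{k_2-1}s^l=s^{k_1}-s^{k_2}$. The pointwise limit under $s=e^{-x/k_2}$ via the Riemann sum is also correct.

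The gap is in the dominated-convergence step. After bounding $s^{k_1}-s^{k_2}$ by $1$, you need an integrable majorant on $(0,\infty)$, uniform in $k_2$, for $\exp\bigl(2\sum_{j=1}^{k_2-1}\frac{e^{-xj/k_2}-1}{j}\bigr)$. None exists. For fixed $k_2$ this function is decreasing in $x$, with limit $e^{-2H_{k_2-1}}>0$ as $x\to\infty$, where $H_m=\sum_{j=1}^m 1/j$. So it is not integrable even for a single $k_2$.

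Concretely, your second bullet fails for large $x$. Indeed, $\int_1^{k_2}\frac{1-e^{-xt/k_2}}{t}\,dt=\int_{x/k_2}^{x}\frac{1-e^{-y}}{y}\,dy\le\log k_2$, so it dominates $\log x-C'$ only for $x$ up to order $k_2$, not for all $x\ge1$.

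The bound also does not carry over from the proof of \eqref{minfty}. There the substitution $x=k(1-s)$ maps $s\in[0,1]$ onto the bounded interval $[0,k]$. Your substitution sends $s\to0$ to $x\to\infty$. The region $x>k_2$, i.e. $s<e^{-1}$, is exactly where the exponential factor stops decaying. That region is negligible in the original integral only because of the factor $s^{k_1}$, which you discarded.

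The fix is easy and makes the harmonic-sum estimate unnecessary: keep $s^{k_1}$. Since $k_2=[Lk_1]\le 2k_1$, you have $0\le s^{k_1}-s^{k_2}\le e^{-xk_1/k_2}\le e^{-x/2}$. The exponential factor and the Jacobian factor $e^{-x/k_2}$ are both at most $1$, and $2k_1/k_2\le 2$ once $k_2>k_1$. So the integrand is dominated by $2e^{-x/2}$, and dominated convergence yields \eqref{k1k1LIinfty}.

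Alternatively, use the paper's substitution $x=[Lk_1](1-s)$, which keeps the range bounded by $k_2$. On that range, $1-(1-\frac{y}{k_2})^{k_2-1}\ge 1-e^{-y/2}$ gives a lower bound of $\log x-C$ for $x\ge2$, uniformly in $k_2$. So the $O(x^{-2})$ majorant you had in mind works there.
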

\bf\noindent Remark.\rm\ Note that if one substitutes $L=1$ in \eqref{k1k1LIinfty}, the expression reduces to $m_\infty$  as is to be expected.

As with Theorem \ref{mk1k2inftyIthm}, Theorem \ref{k1k1LIinftythm}   has a rather quick proof using Theorems \ref{mkthm}, \ref{gapsthm} and \ref{k1k2Ithm}, but again we don't present it here in order to not interrupt the exposition .
\begin{theorem}\label{k1k1LIIinftythm}
Let $m_{k_1,k_2;II}$ be as in Theorem \ref{k1k2IIthm}. Then for $L\in (1,2]$,
\begin{equation}\label{k1k1LIIinfty}
\begin{aligned}
&m_{\infty,L\infty;II}:=\lim_{k_1\to\infty}m_{k_1,[Lk_1];II}=\\
&\frac12\int_0^\infty \left(L+1+(L-1)x\right) e^{-\frac{L-1}2 x}e^{-\int_0^x\frac{1-e^{-y}}ydy}e^{-\int_0^{Lx}\frac{1-e^{-y}}ydy}dx+\\
&\int_0^\infty\left(e^{-\frac{3-L}2x}-e^{-\frac{3L-1}2x}\right)e^{-\int_0^x\frac{1-e^{-y}}ydy}e^{-\int_0^{Lx}\frac{1-e^{-y}}ydy}dx.
\end{aligned}
\end{equation}
\end{theorem}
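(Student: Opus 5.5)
We would prove Theorem \ref{k1k1LIIinftythm} directly from the explicit formulas \eqref{k1k2II1}--\eqref{B(s)} of Theorem \ref{k1k2IIthm}, taking $k_2=[Lk_1]$ and letting $k_1\to\infty$. The first step removes the dependence on the unknown quantities $M^{(j)}_{k_1}$. Since $L\le2$, we have $k_2\le 2k_1$, so every index $j$ appearing in $B(s)$ satisfies $k_1\le j\le k_2-1\le 2k_1-1$. For such $j$ we have $M^{(j)}_{k_1}=k_1$, as noted in the remark after Theorem \ref{k1k2IIthm}. Also $\max(k_2,2k_1)=2k_1$. Hence
\[
B(s)=k_1(1-s)^2s^{-\frac{k_1+k_2}2}\Big(\sum_{n=2k_1}^{k_1+k_2-1}(n-2k_1+1)s^n+\sum_{n=k_1+k_2}^{2k_2-1}(n-k_1-k_2+1)s^n\Big)+k_1(k_2-k_1)(1-s)\big(s^{\frac{k_1+k_2}2}+s^{\frac32k_2-\frac12k_1}\big).
\]
These are explicit arithmetico-geometric sums, and we would sum them in closed form.

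Next, in both integrals we substitute $s=e^{-x/k_1}$, so that $ds=\frac1{k_1}e^{-x/k_1}dx$ and $x\in[0,\infty)$. This follows the variant of Bureaux's proof of \eqref{minfty} reproduced at the end of the introduction. That argument gives, for each fixed $x$,
\[
\sum_{j=1}^{k-1}\frac{s^j-1}j\to-\int_0^{x\,k/k_1}\frac{1-e^{-y}}ydy \quad\text{as } k_1\to\infty,
\]
with $k=k_1$ and $k=k_2\sim Lk_1$. This produces the two factors $e^{-\int_0^x}$ and $e^{-\int_0^{Lx}}$ appearing in \eqref{k1k1LIIinfty}.

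For the stage-one term, we have $s^{(k_2-k_1)/2}\to e^{-\frac{L-1}2x}$ and $k_1(1-s)\to x$. Therefore
\[
\frac1{k_1}\big(k_1+k_2+k_1(k_2-k_1)(1-s)\big)\to L+1+(L-1)x,
\]
which yields the first line of \eqref{k1k1LIIinfty}.

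For the $B$-term, the last summand contributes
\[
\frac1{k_1}\,k_1(k_2-k_1)(1-s)\big(\cdots\big)\to (L-1)x\big(e^{-\frac{L+1}2x}+e^{-\frac{3L-1}2x}\big).
\]
The double-sum part can be treated as a Riemann sum in $t=n/k_1$. Its size is $(1-s)^2\cdot k_1\cdot k_1^2\cdot\frac1{k_1}=O(1)$, and it converges to
\[
x^2e^{\frac{L+1}2x}\Big(\int_2^{1+L}(t-2)e^{-xt}dt+\int_{1+L}^{2L}(t-1-L)e^{-xt}dt\Big).
\]
Alternatively, one can take the limit directly in the closed form from the first step. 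The elementary integrals in $t$ are computed explicitly. We then add the $(L-1)x(\cdots)$ term and expect the polynomial-in-$x$ pieces to cancel, leaving exactly $e^{-\frac{3-L}2x}-e^{-\frac{3L-1}2x}$. Carrying out this cancellation carefully is routine but is the place where errors can creep in. It also serves as a check on the formula for $B(s)$.

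The main obstacle is justifying the interchange of limit and integral, i.e.\ providing a dominating function uniform in $k_1$. For the exponential factor, we would use $\sum_{j=1}^{k-1}\frac{1-s^j}{j}\ge \sum_{j=1}^{k_1-1}\frac{1-e^{-jx/k_1}}j$. The right-hand side is at least a Riemann-sum lower bound for $\int_0^{x}\frac{1-e^{-y}}ydy-C$, which grows like $\log x$. This gives a bound of order $(1+x)^{-2}$ from the two factors combined; this is the same estimate used in the proof of \eqref{minfty}. For $L>1$, the factor $e^{-\frac{L-1}2x}$ and the other exponentials in $x$ supply additional decay. The prefactors are bounded by polynomials in $x$, using $k_1(1-e^{-x/k_1})\le x$ and $s^{n-\frac{k_1+k_2}2}\le e^{-\frac{L-1}{2}x}$-type bounds for $n\ge 2k_1$ (valid since $n\ge 2k_1\ge \frac{k_1+k_2}{2}$). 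Dominated convergence then applies. If the $\log x$ decay from the exponential factors turns out insufficient at the boundary case $L$ near $1$, we would instead use the exponential decay in the prefactors directly.
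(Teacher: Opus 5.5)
Your proposal is correct and follows the paper's route. You use $k_2\le 2k_1$ to replace every $M^{(j)}_{k_1}$ in $B(s)$ by $k_1$, sum the arithmetico-geometric series, rescale $s$ near $1$ on the scale $1/k_1$, and pass to the limit by dominated convergence.

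The one difference is where the cancellation happens. The paper carries out the summation exactly and finds that, already at finite $k_1$, the whole expression collapses to $B(s)=k_1\bigl(s^{\frac32k_1-\frac12k_2}-s^{\frac32k_2-\frac12k_1}\bigr)$. So only this two-term function has to be taken to the limit, with the substitution $x=k_1(1-s)$ rather than your $s=e^{-x/k_1}$. Your piecewise limit also works. The double-sum piece tends to $e^{-\frac{3-L}2x}-e^{-\frac{3L-1}2x}-(L-1)x\bigl(e^{-\frac{L+1}2x}+e^{-\frac{3L-1}2x}\bigr)$, which exactly cancels the $(L-1)x(\cdots)$ term. However, your version needs a separate domination for each piece, and the exact identity makes that bookkeeping unnecessary.

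One correction to your domination step. With $s=e^{-x/k_1}$, the variable $x$ runs over all of $[0,\infty)$. For $x$ much larger than $k_1$, the sums $\sum_{j=1}^{k-1}(1-s^j)/j$ saturate at the harmonic number $H_{k-1}$. So the uniform $(1+x)^{-2}$ bound from the exponential factors fails there. It is valid only for $x$ of order at most $k_1$, which is why it works in the proof of \eqref{minfty}, where $x=k(1-s)\in[0,k]$. The problem is large $x$, not $L$ near $1$.

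Your fallback is what actually does the job. Bound the exponential factors by $1$ and use the uniform decay of the prefactors:
\begin{itemize}
\item $s^{(k_2-k_1)/2}\le e^{-\frac{L-1}4x}$ for large $k_1$;
\item $s^{n-\frac{k_1+k_2}2}\le s^{k_1/2}=e^{-x/2}$ for $n\ge 2k_1$, since $k_2\le 2k_1$;
\item $s^{\frac{k_1+k_2}2}\le e^{-x}$ and $s^{\frac32k_2-\frac12k_1}\le e^{-x}$;
\item $k_1(1-s)\le x$.
\end{itemize}
Together these give integrable majorants of the form $C(1+x)^2e^{-cx}$ with $c>0$, because $L>1$ is fixed.
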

\bf\noindent Remark.\rm\ Note that if one substitutes $L=1$ in \eqref{k1k1LIIinfty}, the expression reduces to $m_\infty$ in \eqref{minfty}, as is to be expected.

Table \ref{table:4} compares the values of $m_{\infty,L\infty;I}$ and $m_{\infty,L\infty;II}$ for various values of $L$.
\begin{table}[h!]
\centering
\begin{tabular}{|| c c  c||}
 \hline
 $L$& $m_{\infty,L\infty;I}$&$m_{\infty,L\infty;II}$\\ [0.5ex]
 \hline\hline
 1.2&0.795&0.768\\
1.4&0.822&0.779\\
1.6&0.838&0.785\\
1.8&0.847&0.790\\
2.0&0.852&0.794\\
 \hline
\end{tabular}
\vspace{1ex}
\caption{Limiting bonding densities under Model I and Model II $k_1,k_2$-bonding with $k_2=Lk_1$ and $k_1\to\infty$,  from Theorems
\ref{k1k1LIinftythm} and \ref{k1k1LIIinftythm}.}
\label{table:4}
\end{table}

It would be interesting to understand the behavior of $\lim_{k_1\to\infty}m_{k_1,Lk_1;I}$ and
$\lim_{k_1\to\infty}m_{k_1,Lk_1;II}$ also for the case $L>2$ as well as for the case that $L=L(k_1)\to\infty$ at various rates.

As noted earlier, we end the introduction with the  proof of \eqref{minfty}.

\noindent \it Proof of \eqref{minfty}.\rm\
Making the substitution $y=k(1-t)$ in the third equality below, we write
\begin{equation}\label{sj-1}
\begin{aligned}
&\sum_{j=1}^{k-1}\frac{s^j-1}j=-\sum_{j=1}^{k-1}\int_s^1t^{j-1}dt=-\int_s^1\frac{1-t^{k-1}}{1-t}dt=-\int_0^{k(1-s)}\frac{1-(1-\frac yk)^{k-1}}ydy.
\end{aligned}
\end{equation}
Therefore, making the substitution $x=k(1-s)$ in the third equality below, we have
\begin{equation}\label{minftyderiv}
\begin{aligned}
&k\int_0^1e^{2\sum_{j=1}^{k-1}\frac{s^j-1}j}ds=k\int_0^1\exp\left(-2\int_0^{k(1-s)}\frac{1-(1-\frac yk)^{k-1}}ydy\right)ds=\\
&\int_0^k\exp\left(-2\int_0^x\frac{1-(1-\frac yk)^{k-1}}ydy\right)dx\stackrel{k\to\infty}{\to}\int_0^\infty\exp\left(-2\int_0^x\frac{1-e^{-y}}ydy\right)dx,
\end{aligned}
\end{equation}
where the limit follows by applying the dominated convergence theorem. This  proves \eqref{minfty}.
\hfill $\square$

We prove Theorem \ref{gapsthm} in section \ref{pfgaps}, Theorem \ref{weakconvgapsthm} in section \ref{weakconvgapsthmpf}, and  Theorems \ref{k1k2IIthm}-\ref{k1k1LIIinftythm} respectively in sections \ref{k1k2IIthmpf}-\ref{k1k1LIIinftythmpf}.
\FloatBarrier

\section{Proof of Theorem \ref{gapsthm}}\label{pfgaps}
In \cite{P14}, we proved Theorem \ref{mkthm} by analyzing the generating function of the sequence $\{S_k^{(n)}\}_{n=1}^\infty$, where $S_k^{(n)}=\sum_{i=1}^nM_k^{(i)}$. The reason we chose to work with the generating function of $\{S_k^{(n)}\}_{n=1}^\infty$ instead of with the generating function of $\{M_k^{(n)}\}_{n=1}^\infty$ is that the former sequence satisfies a three term recurrence formula while the latter sequence satisfies a recursion formula that grows with $n$. This was good enough for the proof of Theorem \ref{mkthm}, but it does not allow for the proof of Theorem \ref{gapsthm}.
The method we employ here can be slightly modified to prove Theorem \ref{mkthm}, as we point out below after completing the proof of Theorem \ref{gapsthm}.

As noted in the introduction, after $k$-bonding on a row of $n$ molecules is completed, between every two consecutive bonded $k$-tuples there will be  a gap of $l$ unbonded molecules, for some
$l\in\{0,1,\cdots, k-1\}$.  There will also be such gaps before the leftmost bonded $k$-tuple and after the rightmost bonded $k$-tuple.
For $n\ge1$, let $G_{k;l}^{(n)}$ denote the expected number of such gaps of length $l$. Define $G_{k;l}^{(0)}=0$, if $l\neq0$ and $G_{k;0}^{(0)}=1$. From the definition of $k$ bonding on a row of $n$ molecules, we have
\begin{equation}\label{gaprecur}
\begin{aligned}
&G_{k;l}^{(n)}=\frac1{n-k+1}\sum_{j=1}^{n-k+1}\left(G_{k;l}^{(j-1)}+G_{k;l}^{(n+1-j-k)} \right)=\\
&\frac2{n-k+1}\sum_{j=0}^{n-k}G_{k;l}^{(j)},\ n\ge k,
\end{aligned}
\end{equation}
with the boundary condition
\begin{equation}\label{gaprecurbc}
G_{k;l}^{(n)}=\begin{cases}1, n=l;\\0, n\in\{0,\cdots, k-1\}-\{l\}.\end{cases}
\end{equation}
Multiplying both sides of \eqref{gaprecur} by $(n-k+1)t^n$ and summing over $n$ from $k$ to $\infty$, we obtain
\begin{equation}\label{multtn}
\sum_{n=k}^\infty(n-k+1)G_{k;l}^{(n)}t^n=2\sum_{n=k}^\infty\left(\sum_{j=0}^{n-k}G_{k;l}^{(j)}\right)t^n.
\end{equation}

Define the generating function
\begin{equation}\label{ggenfunc}
g_{k;l}(t)=\sum_{n=k}^\infty G_{k;l}^{(n)}t^n.
\end{equation}
Using the boundary condition  \eqref{gaprecurbc}, we write the right hand side of \eqref{multtn} as
\begin{equation}\label{doublesum}
\begin{aligned}
&\sum_{n=k}^\infty\left(\sum_{j=0}^{n-k}G_{k;l}^{(j)}\right)t^n=\sum_{n=k}^{2k-1}\left(\sum_{j=0}^{n-k}G_{k;l}^{(j)}\right)t^n+
\sum_{n=2k}^\infty\left(\sum_{j=0}^{n-k}G_{k;l}^{(j)}\right)t^n=\\
&\sum_{n=k+l}^{2k-1}t^n+\sum_{n=2k}^\infty\left(1+\sum_{j=k}^{n-k}G_{k;l}^{(j)}\right)t^n=\sum_{n=k+1}^\infty t^n+\sum_{n=2k}^\infty\left(\sum_{j=k}^{n-k}G_{k;l}^{(j)}\right)t^n.
\end{aligned}
\end{equation}
We have
\begin{equation}\label{tnfree}
t^k\thinspace\frac{g_{k;l}(t)}{1-t}=t^k\left(\sum_{r=0}^\infty t^r\right)\left(\sum_{j=k}^\infty G_{k;l}^{(j)}t^j\right)=\sum_{n=2k}^\infty\left(\sum_{j=k}^{n-k}G_{k;l}^{(j)}\right)t^n.
\end{equation}
From \eqref{multtn}-\eqref{tnfree}, we conclude that
$$
tg_{k;l}'(t)-(k-1)g_{k;l}(t)=\frac{2t^{k+l}}{1-t}+\frac{2t^kg_{k;l}(t)}{1-t},
$$
which we write as
\begin{equation}\label{ode}
\begin{aligned}
&g_{k;l}'(t)=A(t)g_{k;l}(t)+B(t),\ \text{where}\\
&A(t)=\frac{k-1}t+\frac{2t^{k-1}}{1-t};\\
&B(t)=\frac{2t^{k+l-1}}{1-t},
\end{aligned}
\end{equation}
 the dependence of $A$ on $k$ and of $B$ on $k$ and $l$ being suppressed.
Solving the linear ODE gives for any $\epsilon\in(0,1)$,
\begin{equation}\label{odesolu}
g_{k;l}(t)=g_{k;l}(\epsilon)e^{\int_\epsilon^tA(s)ds}+\int_\epsilon^te^{\int_s^tA(r)dr}B(s)ds,\ \epsilon<t<1.
\end{equation}

From \eqref{ode}, we write
$$
A(t)=\frac{k-1}t+\frac2{1-t}-2\sum_{i=0}^{k-2}t^i,
$$
from which it follows that
\begin{equation}\label{eAform}
e^{\int_s^tA(r)dr}=\frac{t^{k-1}}{s^{k-1}}\frac{(1-s)^2}{(1-t)^2}e^{2\sum_{j=1}^{k-1}\frac{s^j-t^j}j}.
\end{equation}
By \eqref{ggenfunc},  $g_{k;l}(\epsilon)=O(\epsilon^k)$, thus  it follows from \eqref{eAform}  that $\lim_{\epsilon\to0}g(\epsilon)e^{\int_\epsilon^tA(s)ds}=0$. Thus, from \eqref{ode}-\eqref{eAform} we conclude that
\begin{equation}\label{g(t)}
g_{k;l}(t)=\frac{2t^{k-1}}{(1-t)^2}\int_0^t(1-s)s^le^{2\sum_{j=1}^{k-1}\frac{s^j-t^j}j}ds.
\end{equation}

It is easy to see that if $\lim_{n\to\infty}\frac{G_{k;l}^{(n)}}n=a$, then
$\lim_{t\to1^-}(1-t)^2g_{k;l}(t)=a$.
Indeed, for any $\epsilon>0$, there exists an $N_\epsilon$ such that $a-\epsilon\le \frac{G_{k;l}^{(n)}}n\le a+\epsilon$, for $n\ge N\epsilon$.
Write
\begin{equation}\label{1limpf}
\sum_{n=k}^{N_\epsilon-1}G_{k;l}^{(n)}t^n+\sum_{n=N_\epsilon}^\infty (a-\epsilon)nt^n
\le g_{k;l}(t)\le \sum_{n=k}^{N_\epsilon-1}G_{k;l}^{(n)}t^n+\sum_{n=N_\epsilon}^\infty (a+\epsilon)nt^n.
\end{equation}
One has
\begin{equation}\label{2limpf}
\sum_{n=N_\epsilon}^\infty bnt^n=bt\left(\frac{t^{N_\epsilon}}{1-t}\right)'=bt\left(\frac{t^{N_\epsilon}}{(1-t)^2}+\frac{N_\epsilon t^{N_\epsilon-1}}{1-t}\right), \ \text{for}\ b>0.
\end{equation}
From \eqref{1limpf} and \eqref{2limpf}, it follows that
$$
a-\epsilon\le \liminf_{t\to1^-}(1-t)^2g_{k;l}(t)\le\limsup_{t\to1^-}(1-t)^2g_{k;l}(t)\le a+\epsilon.
$$
Unfortunately, we don't see any very quick way to prove that $\lim_{n\to\infty}\frac{G_{k;l}^{(n)}}n$ exists.
Define $S_{k;l}^{(n)}=\sum_{j=0}^nG_{k;l}^{(j)}$.
From \eqref{gaprecur}, one has
$$
G_{k;l}^{(n)}=\frac2{n-k+1}S_{k;l}^{(n-k)}.
$$
Thus,
$\lim_{n\to\infty}\frac{G_{k;l}^{(n)}}n$ exists if and only if
$\lim_{n\to\infty}\frac{S_{k;l}^{(n)}}{n^2}$ exists.
The proof that this latter limit exists is almost exactly the same as the proof of Proposition 3.3 in \cite{P14}.
In light of this, we conclude from
\eqref{g(t)} that
$$
\lim_{n\to\infty}\frac{G_{k;l}^{(n)}}n=\lim_{t\to1^-}(1-t)^2g_{k;l}(t)=2\int_0^1(1-s)s^le^{2\sum_{j=1}^{k-1}\frac{s^j-1}j}ds,
$$
which proves Theorem \ref{gapsthm}.\hfill $\square$

We note that the method above can also be use to prove Theorem \ref{mkthm}. Instead of \eqref{gaprecur}, one has
\begin{equation*}
\begin{aligned}
&M_k^{(n)}=\frac1{n-k+1}\sum_{j=1}^{n-k+1}\left(M_k^{(j-1)}+M_k^{(n+1-j-k)}+k \right)=\\
&k+\frac2{n-k+1}\sum_{j=0}^{n-k}M_{k;l}^{(j)},\ n\ge k,
\end{aligned}
\end{equation*}
with boundary condition
$M_k^{(n)}=0$, for $n=0,\cdots k-1$.
\section{Proof of Theorem \ref{weakconvgapsthm}}\label{weakconvgapsthmpf}
  Making the same substitutions used in   \eqref{sj-1} and \eqref{minftyderiv}, and recalling
 \eqref{lgaps}, we have
\begin{equation}\label{gapstoL}
\begin{aligned}
&\sum_{l=0}^Lg_{k;l}=2\int_0^1(1-s)\left(\sum_{l=0}^Ls^l\right)e^{2\sum_{j=1}^{k-1}\frac{s^j-1}j}ds=2\int_0^1(1-s^{L+1})e^{2\sum_{j=1}^{k-1}\frac{s^j-1}j}ds=\\
&\frac2k\int_0^k\left(1-(1-\frac xk)^{L+1}\right)\exp\left(-2\int_0^x\frac{1-(1-\frac yk)^{k-1}}ydy\right)dx,\ L=0,1,2,\cdots.
\end{aligned}
\end{equation}
Recalling the definition of $\mu_k^{\text{gaps}}$ in \eqref{muk}, and using \eqref{gapstoL} for the final equality below, we have
\begin{equation}\label{gapstogammak-1}
\begin{aligned}
&\mu_k^{\text{gaps}}([0,\gamma])=\frac k{m_k}\sum_{l:\frac l{k-1}\le\gamma}g_{k;l}=\frac k{m_k}\sum_{l=0}^{[\gamma(k-1)]}g_{k;l}=\\
&\frac2{m_k}\int_0^k\left(1-(1-\frac xk)^{[\gamma(k-1)]+1}\right)\exp\left(-2\int_0^x\frac{1-(1-\frac yk)^{k-1}}ydy\right)dx, \text{for}\ \gamma\in(0,1].
\end{aligned}
\end{equation}
Theorem \ref{weakconvgapsthm} now follows from \eqref{minfty} and from applying the dominated convergence theorem
 upon letting $k\to\infty$ in  \eqref{gapstogammak-1}.
\hfill $\square$

\section{Proof of Theorem \ref{k1k2IIthm}}\label{k1k2IIthmpf}
From the definition of Model II $k_1,k_2$-bonding, it follows that $M_{k_1,k_2;II}^{(n)}$, the expected value of the number of bonded molecules from a row of $n$ molecules, satisfies
\begin{equation}\label{k1k2recur}
\begin{aligned}
&M_{k_1,k_2;II}^{(n)}=\frac1{(n-k_1+1)+(n-k_2+1)}\times\\
&\left(\sum_{j=1}^{n-k_1+1}\left(M_{k_1,k_2;II}^{(j-1)}+M_{k_1,k_2;II}^{(n-j-k_1+1)}+k_1\right)+
\sum_{j=1}^{n-k_2+1}\left(M_{k_1,k_2;II}^{(j-1)}+M_{k_1,k_2;II}^{(n-j-k_2+1)}+k_2\right)\right),\\
& n\ge k_2,
\end{aligned}
\end{equation}
with the boundary condition
\begin{equation}\label{k1k2recurbc}
M_{k_1,k_2;II}^{(n)}=\begin{cases} M_{k_1}^{(n)},\ n=k_1,\cdots, k_2-1;\\ 0,\ l=0,\cdots, k_1-1,\end{cases}
\end{equation}
where we recall that $M_{k_1}^{(n)}$ denotes the expected number of bonded molecules for a row of $n$ molecules under $k_1$-bonding.
Also, from the definition of Model II $k_1,k_2$-bonding, it follows that
 $M_{k_1,k_2;II_1}^{(n)}$, the expected value of the number of bonded molecules from a row of $n$ molecules at the end of stage one when there are no longer any unbonded $k_2$-tuples, satisfies the very same equation \eqref{k1k2recur}, but this time  with boundary condition
\begin{equation*}
M_{k_1,k_2;II_1}^{(n)}= 0,\ l=0,\cdots, k_2-1.
\end{equation*}
Thus, in light of the linearity of equation \eqref{k1k2recur}, it suffices to prove only \eqref{k1k2II} and \eqref{B(s)}, where in \eqref{k1k2II} we understand the term $m_{k_1,k_2;II_1}$ to be the
right hand side of   \eqref{k1k2II1}.

Multiplying both sides of \eqref{k1k2recur} by $(2n-k_1-k_2+2)t^n$, and summing over $n$ from $k_2$ to $\infty$, we obtain
\begin{equation}\label{multtnagain}
\begin{aligned}
&\sum_{n=k_2}^\infty(2n-k_1-k_2+2)M_{k_1,k_2;II}^{(n)}t^n=\sum_{n=k_2}^\infty\left(k_1(n-k_1+1)+k_2(n-k_2+1)\right)t^n+\\
&2\sum_{n=k_2}^\infty \left(\sum_{j=0}^{n-k_1}M_{k_1,k_2;II}^{(j)}\right)t^n+2\sum_{n=k_2}^\infty \left(\sum_{j=0}^{n-k_2}M_{k_1,k_2;II}^{(j)}\right)t^n.
\end{aligned}
\end{equation}

Define the generating function
\begin{equation}\label{mgenfunc}
m_{k_1,k_2}(t)=\sum_{n=k_2}^\infty M_{k_1,k_2;II}^{(n)}t^n.
\end{equation}
Consider the  first term on the right hand side of \eqref{multtnagain}.
A direct calculation gives
\begin{equation}\label{2sums}
\begin{aligned}
&\sum_{n=k_2}^\infty(n-k_1+1)t^n=\frac{\big[(k_2-k_1+1)-(k_2-k_1)t\big]t^{k_2}}{(1-t)^2};\\
&\sum_{n=k_2}^\infty(n-k_2+1)t^n=\frac{t^{k_2}}{(1-t)^2}.
\end{aligned}
\end{equation}
Now consider the second term on the right hand side of \eqref{multtnagain}. Using the boundary condition \eqref{k1k2recurbc}, similar to \eqref{doublesum} we have
\begin{equation}\label{doublesumMk1}
\begin{aligned}
&\sum_{n=k_2}^\infty \left(\sum_{j=0}^{n-k_1}M_{k_1,k_2;II}^{(j)}\right)t^n=
\sum_{n=\max(k_2,2k_1)}^{k_2+k_1-1}\left(\sum_{j=k_1}^{n-k_1}M_{k_1}^{(j)}\right)t^n+\sum_{n=k_2+k_1}^\infty\left(\sum_{j=k_1}^{k_2-1}M_{k_1}^{(j)}\right)t^n+\\
&\sum_{n=k_2+k_1}^\infty\left(\sum_{j=k_2}^{n-k_1}M_{k_1,k_2;II}^{(j)}\right)t^n.
\end{aligned}
\end{equation}
Note that the second term on the right hand side of \eqref{doublesumMk1} can be written as
\begin{equation}\label{2ndtermk1}
\sum_{n=k_2+k_1}^\infty\left(\sum_{j=k_1}^{k_2-1}M_{k_1}^{(j)}\right)t^n=
\left(\sum_{j=k_1}^{k_2-1}M_{k_1}^{(j)}\right)\frac{t^{k_2+k_1}}{1-t}.
\end{equation}
The final term on the right hand side of \eqref{doublesumMk1} can be dealt with  similar to \eqref{tnfree}. We have
\begin{equation}\label{tnfreeMk1}
t^{k_1}\thinspace\frac{m_{k_1,k_2}(t)}{1-t}=t^{k_1}\left(\sum_{r=0}^\infty t^r\right)\left(\sum_{j=k_2}^\infty M_{k_1,k_2;II}^{(j)}t^j\right)=\sum_{n=k_2+k_1}^\infty\left(\sum_{j=k_2}^{n-k_1}M_{k_1,k_2;II}^{(j)}\right)t^n.
\end{equation}
Now consider the third and final term on the right hand side of \eqref{multtnagain}. Similar to the above, we have
\begin{equation}\label{doublesumMk2}
\begin{aligned}
&\sum_{n=k_2}^\infty \left(\sum_{j=0}^{n-k_2}M_{k_1,k_2;II}^{(j)}\right)t^n=
\sum_{n=k_2+k_1}^{2k_2-1}\left(\sum_{j=k_1}^{n-k_2}M_{k_1}^{(j)}\right)t^n+\sum_{n=2k_2}^\infty\left(\sum_{j=k_1}^{k_2-1}M_{k_1}^{(j)}\right)t^n+\\
&\sum_{n=2k_2}^\infty\left(\sum_{j=k_2}^{n-k_2}M_{k_1,k_2;II}^{(j)}\right)t^n,
\end{aligned}
\end{equation}
and note that the second term on the right hand side of \eqref{doublesumMk2} can be written as
\begin{equation}\label{2ndtermk2}
\sum_{n=2k_2}^\infty\left(\sum_{j=k_1}^{k_2-1}M_{k_1}^{(j)}\right)t^n=
\left(\sum_{j=k_1}^{k_2-1}M_{k_1}^{(j)}\right)\frac{t^{2k_2}}{1-t},
\end{equation}
and the final term on the right hand side of  \eqref{doublesumMk2} can be written as
\begin{equation}\label{tnfreeMk2}
t^{k_2}\thinspace\frac{m_{k_1,k_2}(t)}{1-t}=t^{k_2}\left(\sum_{r=0}^\infty t^r\right)\left(\sum_{j=k_2}^\infty M_{k_1,k_2;II}^{(j)}t^j\right)=\sum_{n=2k_2}^\infty\left(\sum_{j=k_2}^{n-k_2}M_{k_1,k_2;II}^{(j)}\right)t^n.
\end{equation}
From \eqref{multtnagain}-\eqref{tnfreeMk2}, we conclude that
\begin{equation*}
\begin{aligned}
&2tm_{k_1,k_2}'(t)-(k_1+k_2-2)m_{k_1,k_2}(t)=\frac{\big[(k_2-k_1+1)-(k_2-k_1)t\big]k_1t^{k_2}}{(1-t)^2}+\frac{k_2t^{k_2}}{(1-t)^2}+\\
&2\sum_{n=\max(k_2,2k_1)}^{k_2+k_1-1}\left(\sum_{j=k_1}^{n-k_1}M_{k_1}^{(j)}\right)t^n+2\left(\sum_{j=k_1}^{k_2-1}M_{k_1}^{(j)}\right)\frac{t^{k_2+k_1}}{1-t}+
2t^{k_1}\thinspace\frac{m_{k_1,k_2}(t)}{1-t}+\\
&2\sum_{n=k_2+k_1}^{2k_2-1}\left(\sum_{j=k_1}^{n-k_2}M_{k_1}^{(j)}\right)t^n+2\left(\sum_{j=k_1}^{k_2-1}M_{k_1}^{(j)}\right)\frac{t^{2k_2}}{1-t}+
2t^{k_2}\thinspace\frac{m_{k_1,k_2}(t)}{1-t},
\end{aligned}
\end{equation*}
which we write as
\begin{equation}\label{odeagain}
\begin{aligned}
&m_{k_1,k_2}'(t)=A(t)m_{k_1,k_2}(t)+B_1(t)+B_2(t),\ \text{where}\\
&A(t)=\frac{k_1+k_2-2}{2t}+\frac{t^{k_1-1}+t^{k_2-1}}{1-t};\\
&B_1(t)=\big[k_2+k_1(k_2-k_1+1)-k_1(k_2-k_1)t\big]\frac{t^{k_2-1}}{2(1-t)^2};\\
&B_2(t)=\sum_{n=\max(k_2,2k_1)}^{k_2+k_1-1}\big(\sum_{j=k_1}^{n-k_1}M_{k_1}^{(j)}\big)t^{n-1}+\sum_{n=k_2+k_1}^{2k_2-1}\big(\sum_{j=k_1}^{n-k_2}M_{k_1}^{(j)}\big)t^{n-1}+\\
&\left(\sum_{j=k_1}^{k_2-1}M_{k_1}^{(j)}\right)\frac{t^{k_2+k_1-1}+t^{2k_2-1}}{1-t},
\end{aligned}
\end{equation}
the dependence of $A$, $B_1$ and $B_2$ on $k_1$ and $k_2$ being suppressed.
Solving the linear ODE gives for any $\epsilon\in(0,1)$,
\begin{equation}\label{odesoluagain}
m_{k_1,k_2}(t)=m_{k_1,k_2}(\epsilon)e^{\int_\epsilon^tA(s)ds}+\int_\epsilon^te^{\int_s^tA(r)dr}\left(B_1(s)+B_2(s)\right)ds,\ \epsilon<t<1.
\end{equation}

Writing $\frac{t^l}{1-t}=\frac1{1-t}-\sum_{j=0}^{l-1}t^j$, we have
$
\int\frac{t^l}{1-t}dt=-\log(1-t)-\sum_{j=1}^l\frac{t^j}j.
$
Using this with the formula for $A$ in \eqref{odeagain}, we obtain
$$
\int A(r)dr=\frac{k_1+k_2-2}2\log t-2\log(1-t)-\sum_{j=1}^{k_1-1}\frac{t^j}j-\sum_{j=1}^{k_2-1}\frac{t^j}j,
$$
and thus
\begin{equation}\label{eAformagain}
e^{\int_s^tA(r)dr}=\left(\frac ts\right)^{\frac12(k_1+k_2-2)}\frac{(1-s)^2}{(1-t)^2}\exp\left(\sum_{j=1}^{k_1-1}\frac{s^j-t^j}j+\sum_{j=1}^{k_2-1}\frac{s^j-t^j}j\right).
\end{equation}
By \eqref{mgenfunc}, $m_{k_1,k_2}(\epsilon)=O(\epsilon^{k_2})$, thus it follows from     \eqref{eAformagain} that
$\lim_{\epsilon\to0}m_{k_1,k_2}(\epsilon)e^{\int_\epsilon^tA(s)ds}=0$. Thus, from \eqref{odeagain}-\eqref{eAformagain} we conclude that
\begin{equation}\label{m(t)}
\begin{aligned}
&m_{k_1,k_2}(t)=
 \frac{t^{\frac12(k_1+k_2-2)}}{(1-t)^2}\times\\
 &\int_0^t\frac{(1-s)^2}{s^{\frac12(k_1+k_2-2)}}
\exp\left(\sum_{j=1}^{k_1-1}\frac{s^j-t^j}j+\sum_{j=1}^{k_2-1}\frac{s^j-t^j}j \right)\left(B_1(s)+B_2(s)\right)ds.
\end{aligned}
\end{equation}
Similar to the proof of  Theorem \ref{gapsthm}, we conclude from \eqref{m(t)} that
\begin{equation}\label{limMnn}
\begin{aligned}
&\lim_{n\to\infty}\frac{M_{k_1,k_2;II}^{(n)}}n=\lim_{t\to1^-}(1-t)^2m_{k_1,k_2}(t)=\\
&\int_0^1\frac{(1-s)^2}{s^{\frac12(k_1+k_2-2)}}
\exp\left(\sum_{j=1}^{k_1-1}\frac{s^j-1}j+\sum_{j=1}^{k_2-1}\frac{s^j-1}j \right)\left(B_1(s)+B_2(s)\right)ds,
\end{aligned}
\end{equation}
where $B_1,B_s$ are as in \eqref{odeagain}. Substituting for $B_1$ from \eqref{odeagain}, we have after some algebra,
\begin{equation}\label{B1term}
\begin{aligned}
&\int_0^1\frac{(1-s)^2}{s^{\frac12(k_1+k_2-2)}}
\exp\left(\sum_{j=1}^{k_1-1}\frac{s^j-1}j+\sum_{j=1}^{k_2-1}\frac{s^j-1}j \right)B_1(s)ds=\\
&\frac12\int_0^1s^{\frac{k_2-k_1}2}\big(k_1+k_2+k_1(k_2-k_1)(1-s)\big)\exp\left(\sum_{j=1}^{k_1-1}\frac{s^j-1}j+\sum_{j=1}^{k_2-1}\frac{s^j-1}j\right) ds.
\end{aligned}
\end{equation}
One can easily verify  that $\frac{(1-s)^2}{s^{\frac12(k_1+k_2-2)}}B_2(s)$, where $B_2$ is as in \eqref{odeagain}, is equal to $B(s)$, where $B$ is as in \eqref{B(s)}. Using this with
\eqref{limMnn} and \eqref{B1term} proves Theorem \ref{k1k2IIthm}.
\hfill $\square$

\section{Proof of Theorem \ref{mk1k2inftyIthm}}\label{mk1k2inftyIthmpf}
We write the second term on the right hand side of \eqref{k1k2I}
as
\begin{equation}\label{prepscale}
\sum_{l=k_1}^{k_2-1}g_{k_2;l}M_{k_1}^{(l)}=m_{k_2}\frac{k_2-1}{m_{k_2}}\sum_{l:\frac l{k_2-1}\in[\frac{k_1}{k_2-1},1]}\frac l{k_2-1}g_{k_2;l}\frac{M_{k_1}^{(l)}}l.
\end{equation}
By \eqref{muk}, Theorem \ref{weakconvgapsthm} and Corollary \ref{expvalf},
\begin{equation}\label{wkconvinproof}
\lim_{k_2\to\infty} \frac{k_2-1}{m_{k_2}}\sum_{l:\frac l{k_2-1}\in[\frac{k_1}{k_2-1},1]}g_{k_2;l}\frac l{k_2-1}=\int_0^1\gamma f_{\mu^{\text{gaps}}_\infty}(\gamma)d\gamma=\frac{1-m_\infty}{m_\infty}.
\end{equation}
By \eqref{mk}, $\lim_{l\to\infty}\frac{M_{k_1}^{(l)}}l=m_{k_1}$, and of course $\lim_{k_2\to\infty}m_{k_2}=m_\infty$. Using this with \eqref{prepscale}, \eqref{wkconvinproof} and \eqref{k1k2I} proves
Theorem \ref{mk1k2inftyIthm}.
\hfill $\square$

\section{Proof of Theorem \ref{mk1k2inftyIIthm}}\label{mk1k2inftyIIthmpf}
We break up the formula for $m_{k_1,k_2;II}$ in \eqref{k1k2II} into four parts.
The first part is $m_{k_1,k_2;II_1}$ from \eqref{k1k2II1}:
$$
\begin{aligned}
&I_{k_2}:=m_{k_1,k_2;II_1}=\\
&\frac12\int_0^1\exp\left(\sum_{j=1}^{k_1-1}\frac{s^j-1}j+\sum_{j=1}^{k_2-1}\frac{s^j-1}j\right)s^{\frac{k_2-k_1}2}\big(k_1+k_2+k_1(k_2-k_1)(1-s)\big) ds.
\end{aligned}
$$
Recalling \eqref{B(s)}, we define the   three other parts by
$$
II_{k_2}=\big(\sum_{j=k_1}^{k_2-1}M^{(j)}_{k_1}\big)\int_0^1\exp\left(\sum_{j=1}^{k_1-1}\frac{s^j-1}j+\sum_{j=1}^{k_2-1}\frac{s^j-1}j\right)
(1-s)\left(s^{\frac12k_2+\frac12k_1}+s^{\frac32k_2-\frac12k_1}\right)ds,
$$
$$
\begin{aligned}
&III_{k_2}=\int_0^1\exp\left(\sum_{j=1}^{k_1-1}\frac{s^j-1}j+\sum_{j=1}^{k_2-1}\frac{s^j-1}j\right)(1-s)^2
\sum_{n=k_2}^{k_2+k_1-1}\big(\sum_{j=k_1}^{n-k_1}M^{(j)}_{k_1}\big)s^{n-\frac{k_1}2-\frac{k_2}2}ds,
\end{aligned}
$$
and
$$
\begin{aligned}
&IV_{k_2}=\int_0^1\exp\left(\sum_{j=1}^{k_1-1}\frac{s^j-1}j+\sum_{j=1}^{k_2-1}\frac{s^j-1}j\right)(1-s)^2
\sum_{n=k_2+k_1}^{2k_2-1}\big(\sum_{j=k_1}^{n-k_2}M^{(j)}_{k_1}\big)s^{n-\frac{k_1}2-\frac{k_2}2}ds.
\end{aligned}
$$
(The lower bound in the outer summation in $III_{k_2}$ is actually $\max(k_2,2k_1)$, but since we are analyzing the case that $k_1$ is fixed and $k_2\to\infty$, it suffices to write $k_2$.)

We first consider $I_{k_2}$. Making the same substitutions as were made in
\eqref{sj-1} and \eqref{minftyderiv}, with $k_2$ in place of $k$ so that $x=k_2(1-s)$, we have
\begin{equation}\label{substk2}
\begin{aligned}
&k_2\int_a^1\exp\big(\sum_{j=1}^{k_2-1}\frac{s^j-1}j\big)s^{\frac{k_2}2}ds=\\
&\int_0^{(1-a)k_2}\exp\left(-\int_0^x\frac{1-(1-\frac y{k_2})^{k_2-1}}ydy\right)(1-\frac x{k_2})^\frac{k_2}2dx\stackrel{k_2\to\infty}{\to}\\
&\int_0^\infty\exp\left(-\int_0^x\frac{1-e^{-y}}ydy\right)e^{-\frac x2}dx,\ \text{for any}\ a\in[0,1),
\end{aligned}
\end{equation}
where the limit follows from the dominated convergence theorem.
From \eqref{substk2} it follows that
\begin{equation}\label{h(1)}
\begin{aligned}
&\lim_{k_2\to\infty}k_2\int_0^1h(s)\exp\big(\sum_{j=1}^{k_2-1}\frac{s^j-1}j\big)s^{\frac{k_2}2}ds=h(1)\int_0^\infty\exp\left(-\int_0^x\frac{1-e^{-y}}ydy\right)e^{-\frac x2}\thinspace dx,\\
&\text{for any continuous function}\ h\ \text{on}\ [0,1].
\end{aligned}
\end{equation}
From \eqref{h(1)} and the definition of $I_{k_2}$ it follows that
\begin{equation}\label{Ilimit}
\begin{aligned}
&\lim_{k_2\to\infty}I_{k_2}=\frac12\lim_{k_2\to\infty}k_2\int_0^1\exp\big(\sum_{j=1}^{k_2-1}\frac{s^j-1}j\big)s^{\frac{k_2}2}ds=\\
&\frac12\int_0^\infty\exp\left(-\int_0^x\frac{1-e^{-y}}ydy\right)e^{-\frac x2}\thinspace dx=D,
\end{aligned}
\end{equation}
where $D$ is as in \eqref{D}.
This proves \eqref{mk1k2inftyII_1} and \eqref{D}.

To prove \eqref{mk1k2inftyII} and thereby complete the proof of the theorem, we need to show that
\begin{equation}\label{234}
\lim_{k_2\to\infty}\left(II_{k_2}+III_{k_2}+IV_{k_2}\right)=(1-D)m_{k_1}.
\end{equation}
We first consider $II_{k_2}$.
Similar to \eqref{substk2}, we have
\begin{equation}\label{substk2again}
\begin{aligned}
&k_2^2\int_a^1\exp\big(\sum_{j=1}^{k_2-1}\frac{s^j-1}j\big)(1-s)s^{bk_2}ds=\\
&\int_0^{(1-a)k_2}\exp\left(-\int_0^x\frac{1-(1-\frac y{k_2})^{k_2-1}}ydy\right)x(1-\frac x{k_2})^{bk_2}dx\stackrel{k_2\to\infty}{\to}\\
&\int_0^\infty\exp\left(-\int_0^x\frac{1-e^{-y}}ydy\right)xe^{-bx}dx,\ \text{for}\ b>0\ \text{and any}\ a\in[0,1),
\end{aligned}
\end{equation}
from which it follows that
\begin{equation}\label{h(1)again}
\begin{aligned}
&\lim_{k_2\to\infty}k_2^2\int_0^1h(s)\exp\big(\sum_{j=1}^{k_2-1}\frac{s^j-1}j\big)(1-s)s^{bk_2}ds=\\
&h(1)\int_0^\infty\exp\left(-\int_0^x\frac{1-e^{-y}}ydy\right)xe^{-bx} dx,\\
& \text{for}\ b>0\ \text{and any continuous function}\ h\ \text{on}\ [0,1].
\end{aligned}
\end{equation}
By \eqref{mk}, $\sum_{j=k_1}^{k_2-1}M^{(j)}_{k_1}\sim \sum_{j=k_1}^{k_2-1}jm_{k_1}\sim m_{k_1}\frac{k_2^2}2$, as $k_2\to\infty$.
Using this with  \eqref{h(1)again} and the definition of $II_{k_2}$, it follows that
\begin{equation}\label{IIlimit}
\begin{aligned}
&\lim_{k_2\to\infty}II_{k_2}=\frac{m_{k_1}}2\int_0^\infty\exp\left(-\int_0^x\frac{1-e^{-y}}ydy\right)x\left(e^{-\frac12x}+e^{-\frac32x}\right) dx.
\end{aligned}
\end{equation}

We now consider $III_{k_2}$.
By \eqref{mk}, $\sum_{j=k_1}^{n-k_1}M^{(j)}_{k_1}\sim\sum_{j=k_1}^{n-k_1}jm_{k_1}\sim m_{k_1}\frac{k_2^2}2$ as $k_2\to\infty$, uniformly over
$n\in\{k_2,\cdots, k_2+k_1-1\}$. Thus, we have
$$
\begin{aligned}
&\sum_{n=k_2}^{k_2+k_1-1}\big(\sum_{j=k_1}^{n-k_1}M^{(j)}_{k_1}\big)s^{n-\frac{k_1}2-\frac{k_2}2}\sim k_1m_{k_1}\frac{k_2^2}2s^{\frac{k_2}2}h_{k_1}(s),\ \text{as}\ k_2\to\infty,\\
&\text{where}\ h_{k_1}(s)\ \text{is continuous on}\  [0,1].
\end{aligned}
$$
Thus,
\begin{equation}\label{estIII}
\begin{aligned}
&III_{k_2}\sim k_1m_{k_1}\frac{k_2^2}2\int_0^1\exp\left(\sum_{j=1}^{k_1-1}\frac{s^j-1}j+\sum_{j=1}^{k_2-1}\frac{s^j-1}j\right)(1-s)^2s^{\frac{k_2}2}h_{k_1}(s)ds.
\end{aligned}
\end{equation}
Making the same substitutions as before, we have
\begin{equation}\label{estIIIsubst}
\begin{aligned}
&k_2^2\int_0^1\exp\left(\sum_{j=1}^{k_2-1}\frac{s^j-1}j\right)(1-s)^2s^{\frac{k_2}2}ds=\\
&\frac1{k_2}\int_0^{k_2}\exp\left(-\int_0^x\frac{1-(1-\frac y{k_2})^{k_2-1}}ydy\right)x^2(1-\frac x{k_2})^{\frac{k_2}2}dx\stackrel{k_2\to\infty}{\to}0.
\end{aligned}
\end{equation}
From \eqref{estIII} and \eqref{estIIIsubst}, we conclude that
\begin{equation}\label{IIIlimit}
\lim_{k_2\to\infty}III_{k_2}=0.
\end{equation}

We now consider $IV_{k_2}$, which is somewhat more involved.  From the substitutions in
\eqref{sj-1} and \eqref{minftyderiv} that were applied to obtain
 \eqref{substk2} and \eqref{substk2again}, and from  the conclusions in \eqref{h(1)} and \eqref{h(1)again}, it is easy to see that the following holds.
\begin{equation}\label{expression}
\begin{aligned}
&\text{For}\ h(x)\ \text{continuous on}\ [0,1]\  \text{with}\ h(1)>0\ \text{and}\ b>0,\\
&\lim_{k_2\to\infty}k^C_2\int_0^1\exp\big(\sum_{j=1}^{k_2-1}\frac{s^j-1}j\big)(1-s)^Ds^{bk_2}h(s)ds=\\
&\begin{cases} h(1)\int_0^\infty\exp\left(-\int_0^x\frac{1-e^{-y}}ydy\right)x^De^{-bx}dx,\ \text{if}\ C=D+1;\\
\infty,\ \text{if}\ C>D+1;\\
0,\ \text{if}\ C<D+1.\end{cases}
\end{aligned}
\end{equation}
Because of this, when calculating $\lim_{k_2\to\infty}IV_{k_2}$, we may replace the term $M_{k_1}^{(j)}$  by the expression for its leading asymptotic behavior as
$j\to\infty$, namely $jm_{k_1}$, as follows from \eqref{mk}.
We can also let $h(s)=s^{-\frac{k_1}2}\exp(\sum_{j=1}^{k_1-1}\frac{s^j-1}j)$, which satisfies $h(1)=1$. Thus,
we have
\begin{equation}\label{replacementasymp}
\begin{aligned}
&\lim_{k_2\to\infty}IV_{k_2}=\\
&\lim_{k_2\to\infty}\int_0^1\exp(\sum_{j=1}^{k_2-1}\frac{s^j-1}j)(1-s)^2
\sum_{n=k_2+k_1}^{2k_2-1}\big(\sum_{j=k_1}^{n-k_2}jm_{k_1}\big)s^{n-\frac{k_2}2}ds.
\end{aligned}
\end{equation}
The same considerations  in fact allow us to conclude that
\begin{equation}\label{replacementasympagain}
\begin{aligned}
&\lim_{k_2\to\infty}IV_{k_2}=\\
&\lim_{k_2\to\infty}\int_0^1\exp(\sum_{j=1}^{k_2-1}\frac{s^j-1}j)(1-s)^2
\sum_{n=k_2}^{2k_2-1}\big(\sum_{j=0}^{n-k_2}jm_{k_1}\big)s^{n-\frac{k_2}2}ds,
\end{aligned}
\end{equation}
because the difference between $\sum_{n=k_2}^{2k_2-1}\big(\sum_{j=0}^{n-k_2}jm_{k_1}\big)s^{n-\frac{k_2}2}$ and\newline $\sum_{n=k_2+k_1}^{2k_2-1}\big(\sum_{j=k_1}^{n-k_2}jm_{k_1}\big)s^{n-\frac{k_2}2}$ is of lower order than
$\sum_{n=k_2+k_1}^{2k_2-1}\big(\sum_{j=k_1}^{n-k_2}jm_{k_1}\big)s^{n-\frac{k_2}2}$ as $k_2\to\infty$.
We prefer to work with \eqref{replacementasympagain} rather than with \eqref{replacementasymp} because this makes the calculations below somewhat simpler.

We write
\begin{equation}\label{suminintegral}
\sum_{n=k_2}^{2k_2-1}\big(\sum_{j=0}^{n-k_2}jm_{k_1}\big)s^{n-\frac{k_2}2}=\frac{m_{k_1}}2\sum_{n=k_2}^{2k_2-1}(n-k_2)(n-k_2+1)s^{n-\frac{k_2}2},
\end{equation}
and
\begin{equation}\label{contsuminintegral}
\begin{aligned}
&\sum_{n=k_2}^{2k_2-1}(n-k_2)(n-k_2+1)s^{n-\frac{k_2}2}=s^{\frac{k_2}2}\sum_{m=0}^{k_2-1}m(m+1)s^m=\\
&s^{\frac{k_2}2}\sum_{m=-1}^{k_2-1}m(m+1)s^m=
s^{\frac{k_2}2+1}\big(\sum_{m=-1}^{k_2-1}s^{m+1}\big)''=s^{\frac{k_2}2+1}\big(\frac{1-s^{k_2+1}}{1-s}\big)''=\\
&s^{\frac{k_2}2+1}\left(\frac{2(1-s^{k_2+1})}{(1-s)^3}-\frac{2(k_2+1)s^{k_2}}{(1-s)^2}-\frac{k_2(k_2+1)s^{k_2-1}}{1-s}  \right).
\end{aligned}
\end{equation}
From \eqref{replacementasympagain}-\eqref{contsuminintegral}
we have
\begin{equation}\label{replacementasympagainagain}
\begin{aligned}
&\lim_{k_2\to\infty}IV_{k_2}=\frac{m_{k_1}}2\lim_{k_2\to\infty}\int_0^1\exp(\sum_{j=1}^{k_2-1}\frac{s^j-1}j)\times\\
&\left(\frac{2(s^{\frac{k_2}2+1}-s^{\frac{3k_2}2+2})}{(1-s)}-2(k_2+1)s^{\frac{3k_2}2+1}-k_2(k_2+1)(1-s)s^{\frac{3k_2}2}\right)ds.
\end{aligned}
\end{equation}
Making the same substitutions as before, we obtain
\begin{equation}\label{substIV}
\begin{aligned}
&\lim_{k_2\to\infty}\int_0^1\exp(\sum_{j=1}^{k_2-1}\frac{s^j-1}j)\times\\
&\left(\frac{2(s^{\frac{k_2}2+1}-s^{\frac{3k_2}2+2})}{(1-s)}-2(k_2+1)s^{\frac{3k_2}2+1}-k_2(k_2+1)(1-s)s^{\frac{3k_2}2}\right)ds=\\
&\int_0^\infty\exp\left(-\int_0^x\frac{1-e^{-y}}ydy\right)\left( \frac{2(e^{-\frac x2}-e^{-\frac{3x}2})}x-2e^{-\frac{3x}2}-xe^{-\frac{3x}2}\right)dx.
\end{aligned}
\end{equation}
From \eqref{replacementasympagainagain} and \eqref{substIV} we conclude that
\begin{equation}\label{IVlimit}
\lim_{k_2\to\infty}IV_{k_2}=\frac{m_{k_1}}2\int_0^\infty\exp\left(-\int_0^x\frac{1-e^{-y}}ydy\right)\left( \frac{2(e^{-\frac x2}-e^{-\frac{3x}2})}x-2e^{-\frac{3x}2}-xe^{-\frac{3x}2}\right)dx.
\end{equation}

Note that
$$
\left(\exp\left(-\int_0^x\frac{1-e^{-y}}ydy\right)xe^{-\frac12x}\right)'=-\exp\left(-\int_0^x\frac{1-e^{-y}}ydy\right)\left(\frac12x e^{-\frac12x}-e^{-\frac32x}\right).
$$
Thus,
\begin{equation}\label{integral=0}
\begin{aligned}
&\int_0^\infty\exp\left(-\int_0^x\frac{1-e^{-y}}ydy\right)\left(\frac12x e^{-\frac12x}-e^{-\frac32x}\right)dx=\\
&-\left(\exp\left(-\int_0^x\frac{1-e^{-y}}ydy\right)xe^{-\frac12x}\right)\Big|_0^\infty=0.
\end{aligned}
\end{equation}
Also note
that the term
$\frac{m_{k_1}}2\int_0^\infty\exp\left(-\int_0^x\frac{1-e^{-y}}ydy\right)xe^{-\frac32x}dx$  in \eqref{IIlimit} cancels with a term from \eqref{IVlimit}.
Using this last fact along with \eqref{IIlimit}, \eqref{IIIlimit}, \eqref{IVlimit} and \eqref{integral=0}, we obtain
\begin{equation}\label{II+III+IV}
\lim_{k_2\to\infty}\left(II_{k_2}+III_{k_2}+IV_{k_2}\right)=m_{k_1}\int_0^\infty\exp\left(-\int_0^x\frac{1-e^{-y}}ydy\right)\frac{(e^{-\frac x2}-e^{-\frac{3x}2})}xdx.
\end{equation}
Integrating by parts with $u=e^{-\frac12x}$ and $v'=\exp\left(-\int_0^x\frac{1-e^{-y}}ydy\right)\frac{1-e^{-x}}x$,
we have
\begin{equation}\label{finalbyparts}
\begin{aligned}
&\int_0^\infty\exp\left(-\int_0^x\frac{1-e^{-y}}ydy\right)\frac{(e^{-\frac x2}-e^{-\frac{3x}2})}xdx=\\
&-e^{-\frac12x}\exp\left(-\int_0^x\frac{1-e^{-y}}ydy\right)\Big|_0^\infty-\frac12\int_0^\infty\exp\left(-\int_0^x\frac{1-e^{-y}}ydy\right)e^{-\frac12x}dx=\\
&1-\frac12\int_0^\infty\exp\left(-\int_0^x\frac{1-e^{-y}}ydy\right)e^{-\frac12x}dx.
\end{aligned}
\end{equation}
Now \eqref{234} follows from \eqref{II+III+IV} and \eqref{finalbyparts}.

\hfill $\square$

\section{Proof of Theorem \ref{k1k1LIinftythm}}\label{k1k1LIinftythmpf}
Recalling that $M_{k_1}^{(l)}=k_1$, for $l\in[k_1,2k_1-1]$, we have from
 \eqref{k1k2I},
\begin{equation}\label{mk1Lk1form}
m_{k_1,[Lk_1];I}=m_{[Lk_1]}+\sum_{l=k_1}^{[Lk_1]-1}g_{[Lk_1];l}M^{(l)}_{k_1}=m_{[Lk_1]}+k_1\sum_{l=k_1}^{[Lk_1]-1}g_{[Lk_1];l},\ L\in(1,2].
\end{equation}
Substituting from \eqref{lgaps} in the sum on the right hand side of \eqref{mk1Lk1form}, we have
\begin{equation}\label{mk1Lk1formsum}
\begin{aligned}
&k_1\sum_{l=k_1}^{[Lk_1]-1}g_{[Lk_1];l}=2k_1\int_0^1(1-s)\left(\sum_{l=k_1}^{[Lk_1]-1}s^l\right)e^{2\sum_{j=1}^{[Lk_1]-1}\frac{s^j-1}j}ds=\\
&2k_1\int_0^1\left(s^{k_1}-s^{[Lk_1]}\right)e^{2\sum_{j=1}^{[Lk_1]-1}\frac{s^j-1}j}ds.
\end{aligned}
\end{equation}
Making the same substitutions as were made in
\eqref{sj-1} and \eqref{minftyderiv}, with $[Lk_1]$ in place of $k$ so that $x=[Lk_1](1-s)$,
we  obtain from  \eqref{mk1Lk1formsum}
\begin{equation}
\begin{aligned}
&k_1\sum_{l=k_1}^{[Lk_1]-1}g_{[Lk_1];l}=\\
&\frac2L\int_0^{[Lk_1]}\left((1-\frac x{[Lk_1]})^{k_1}-(1-\frac x{[Lk_1]})^{[Lk_1]}\right)\exp\left(-2\int_0^x\frac{1-(1-\frac y{[Lk_1]})^{[Lk_1]-1}}ydy\right)dx\stackrel{k_1\to\infty}{\to}\\
&\frac2L\int_0^\infty \left(e^{-\frac1L x}-e^{-x}\right) e^{-2\int_0^x\frac{1-e^{-y}}ydy}dx,
\end{aligned}
\end{equation}
which proves \eqref{k1k1LIinfty}.
\hfill $\square$

\section{Proof of Theorem \ref{k1k1LIIinftythm}}\label{k1k1LIIinftythmpf}
We begin by writing down explicitly $B(s)$ from  \eqref{B(s)} in the case that $k_2=[Lk_1]$ with $L\in(1,2]$.  Since $L\in(1,2]$, we have $M_{k_1}^{(j)}=k_1$ in every place it appears on the right hand side of \eqref{B(s)}. We continue to write $k_2$ instead of $[Lk_1]$, in order to have  the long calculations below take up a little less space. We have
\begin{equation}\label{BLle2}
\begin{aligned}
&B(s)=(1-s)^2s^{-\frac{k_1}2-\frac{k_2}2}\times\\
&\left(k_1\sum_{n=2k_1}^{k_2+k_1-1}(n-2k_1+1)s^n+k_1\sum_{n=k_2+k_1}^{2k_2-1}\big(n-k_2-k_1+1\big)s^n\right)+\\
&k_1\left(k_2-k_1\right)(1-s)\left(s^{\frac{3k_2}2-\frac{k_1}2}+s^{\frac{k_2}2+\frac{k_1}2} \right).
\end{aligned}
\end{equation}
Differentiating and performing some algebra, we have
\begin{equation}\label{ab}
\begin{aligned}
&\sum_{n=a}^bns^n=s\left(\sum_{n=a}^bs^n\right)'=\\
&\frac s{(1-s)^2}\left(as^{a-1}-(a-1)s^a-(b+1)s^b+bs^{b+1}\right),\ a<b.
\end{aligned}
\end{equation}
Using \eqref{ab}, the two sums on the right hand side of \eqref{BLle2} can be written as
\begin{equation}\label{twosums}
\begin{aligned}
&\sum_{n=2k_1}^{k_2+k_1-1}(n-2k_1+1)s^n=\frac s{(1-s)^2}\times\\
&\left(2k_1s^{2k_1-1}-(2k_1-1)s^{2k_1}-(k_2+k_1)s^{k_2+k_1-1}+(k_2+k_1-1)s^{k_2+k_1}\right)-\\
&\frac{2k_1-1}{1-s}\left(s^{2k_1}-s^{k_2+k_1}\right);\\
&\sum_{n=k_2+k_1}^{2k_2-1}(n-k_2-k_1+1)s^n=\frac s{(1-s)^2}\times\\
&\left((k_2+k_1)s^{k_2+k_1-1}-(k_2+k_1-1)s^{k_2+k_1}-2k_2s^{2k_2-1}+(2k_2-1)s^{2k_2}\right)-\\
&\frac{k_2+k_1-1}{1-s}\left(s^{k_2+k_1}-s^{2k_2}\right).
\end{aligned}
\end{equation}
Substituting \eqref{twosums} in \eqref{BLle2}, we obtain
\begin{equation*}
\begin{aligned}
&B(s)=\\
&k_1\left(2k_1s^{\frac32k_1-\frac12k_2}-(2k_1-1)s^{\frac32k_1-\frac{k_2}2+1}-(k_2+k_1)s^{\frac{k_2}2+\frac{k_1}2}+(k_2+k_1-1)s^{\frac{k_2}2+\frac{k_1}2+1}\right)-\\
&k_1(2k_1-1)(1-s)\left(s^{\frac32k_1-\frac{k_2}2}-s^{\frac{k_2}2+\frac{k_1}2}\right)+\\
&k_1\left((k_2+k_1)s^{\frac{k_2}2+\frac{k_1}2}-(k_2+k_1-1)s^{\frac{k_2}2+\frac{k_1}2+1}-2k_2s^{\frac32k_2-\frac12k_1}+(2k_2-1)s^{\frac32k_2-\frac12k_1+1}\right)-\\
&k_1(k_2+k_1-1)(1-s)\left(s^{\frac{k_2}2+\frac{k_1}2}-s^{\frac32k_2-\frac12k_1}\right)+\\
&k_1\left(k_2-k_1\right)(1-s)\left(s^{\frac{3k_2}2-\frac{k_1}2}+s^{\frac{k_2}2+\frac{k_1}2} \right).
\end{aligned}
\end{equation*}
Collecting terms, one finds that the above calculation reduces to
\begin{equation}\label{B(s)k2le2k1}
B(s)=k_1(s^{\frac32k_1-\frac12k_2}-s^{\frac32k_2-\frac12k_1}).
\end{equation}

From \eqref{k1k2II}, \eqref{k1k2II1} and \eqref{B(s)k2le2k1}, we conclude that
\begin{equation}\label{mk1k2le2k1}
\begin{aligned}
&m_{k_1,k_2;II}=\\
&\frac12\int_0^1s^{\frac{k_2-k_1}2}\big(k_1+k_2+k_1(k_2-k_1)(1-s)\big)\exp\left(\sum_{j=1}^{k_1-1}\frac{s^j-1}j+\sum_{j=1}^{k_2-1}\frac{s^j-1}j\right) ds+\\
&\int_0^1k_1(s^{\frac32k_1-\frac12k_2}-s^{\frac32k_2-\frac12k_1})\exp\left(\sum_{j=1}^{k_1-1}\frac{s^j-1}j+\sum_{j=1}^{k_2-1}\frac{s^j-1}j\right) ds,\ \text{if}\ k_2\le 2k_1.
\end{aligned}
\end{equation}
Consider the first term on the right hand side of \eqref{mk1k2le2k1} with $k_2=[Lk_1]$. By the same substitution  used in
\eqref{sj-1} and \eqref{minftyderiv},
  with $k_1$ in place of $k$ so that $x=k_1(1-s)$,
   we have
\begin{equation}\label{firsttermfinal9}
\begin{aligned}
&\lim_{k_1\to\infty}\frac12\int_0^1s^{\frac{k_2-k_1}2}\big(k_1+k_2+k_1(k_2-k_1)(1-s)\big)\exp\left(\sum_{j=1}^{k_1-1}\frac{s^j-1}j+\sum_{j=1}^{k_2-1}\frac{s^j-1}j\right) ds=\\
&\lim_{k_1\to\infty}\frac12\int_0^{k_1}(1-\frac x{k_1})^{\frac{L-1}2k_1}\left(1+L+(L-1)x\right)\times\\
&\exp\left(-\int_0^x\frac{1-(1-\frac yk)^{k-1}}ydy-\int_0^{Lx}\frac{1-(1-\frac yk)^{k-1}}ydy\right)dx=\\
&\frac12\int_0^\infty\left(L+1+(L-1)x\right)e^{-\frac{L-1}2x}e^{-\int_0^x\frac{1-e^{-y}}ydy-\int_0^{Lx}\frac{1-e^{-y}}ydy}dx.
\end{aligned}
\end{equation}
Similarly, for the second term on the right hand side of \eqref{mk1k2le2k1}, we have
\begin{equation}\label{secondtermfinal9}
\begin{aligned}
&\lim_{k_1\to\infty}\int_0^1k_1(s^{\frac32k_1-\frac12k_2}-s^{\frac32k_2-\frac12k_1})\exp\left(\sum_{j=1}^{k_1-1}\frac{s^j-1}j+\sum_{j=1}^{k_2-1}\frac{s^j-1}j\right) ds=\\
&\int_0^\infty  \left(e^{-\frac{3-L}2x}-e^{-\frac{3L-1}2x} \right)e^{-\int_0^x\frac{1-e^{-y}}ydy}e^{-\int_0^{Lx}\frac{1-e^{-y}}ydy}dx.
\end{aligned}
\end{equation}
From \eqref{mk1k2le2k1}-\eqref{secondtermfinal9}, we obtain \eqref{k1k1LIIinfty}, completing the proof of the theorem.
\hfill $\square$

\end{document}